\newcommand{\Li}{L_\infty}
\newcommand{\LL}{\mathop{L}}
\newcommand{\T}{T}
\newcommand{\B}{B}
\newcommand{\F}{F}
\newcommand{\R}{{{\mathbb R}^n}}
\newcommand{\Det}{\mathop{\mathrm{Det}}}
\newcommand{\sgn}{\mathrm{sgn}}
\newcommand{\C}{\mathscr{C}}
\newcommand{\g}{\mathfrak g}
\newcommand{\set}[1]{\underline{\mathbf{#1}}}
\newcommand{\We}{\mathop{\mathcal{W}^n}}
\newcommand{\FM}{\mathbf{FM}_{n}}
\newcommand{\ND}{N(Disk)}
\newcommand{\Dil}{\mathrm{Dil}(n)}
\newcommand{\fm}{\mathfrak{fm}_{n}}
\newcommand{\CS}[2]{\C(#1)(#2)}
\newcommand{\fCS}[2]{f\C(#1)(#2)}
\newcommand{\Cs}[2]{\C^{0}(#1)(#2)}
\newcommand{\Op}{\mathop{\mathcal{O}}}
\newcommand{\Set}{\mathbf{Set}}
\newcommand{\mul}{mul}
\newcommand{\gl}{SO(n)}
\newcommand{\fFM}{f\FM}
\newcommand{\ffm}{f\fm}
\newcommand{\Disk}{D}
\newcommand{\Mor}{\mathop{\mathrm{Mor}}}
\newcommand{\Hom}{\mathop{\mathrm{Hom}}}
\newcommand{\Ran}{Ran}
\newcommand{\ranp}{\varpi}
\newcommand{\col}{\mathfrak{Col}}
\renewcommand{\Col}{\mathfrak{col}}
\newcommand{\SetIso}{\Set_\simeq}
\newcommand{\SetInj}{\Set_{\hookrightarrow}}
\newcommand{\id}{\mathrm{id}}
\newcommand{\tens}[1]{\mathbin{\mathop{\otimes}\limits_{#1}}}
\newcommand{\ev}{\mathfrak{v}}
\newcommand{\EV}{\mathcal{V}}
\newcommand{\ealpha}{\mathrm{A}}
\newcommand{\Ealpha}{\mathcal{A}}
\renewcommand{\Im}{\mathop{\mathrm{Im}}}
\theoremstyle{plain}
\newtheorem{prop}{Proposition}
\newtheorem{theorem}{Theorem}
\theoremstyle{definition}
\newtheorem{definition}{Definition}
\newtheorem{example}{Example}
\theoremstyle{remark}
\begin{document}

\title{Weyl $n$-algebras}
\author{Nikita Markarian}

\date{}

\address{National Research University Higher School of Economics,
Department of Mathematics, 20 Myasnitskaya str., 101000, Moscow,
Russia}

\email{nikita.markarian@gmail.com}

\thanks{This research carried out in 2014-2015 was supported by “The National Research 
University Higher School of Economics’ Academic Fund Program” grant 14-01-0034.
}

\maketitle

\begin{abstract}
We introduce Weyl $n$-algebras and show how their factorization complex 
may be used to define invariants of manifolds. In the appendix, we 
heuristically explain why these invariants must be 
perturbative Chern--Simons invariants. 
\end{abstract}

\section*{Introduction}

The aim of this article is to develop the idea announced in \cite{M}:
Chern--Simons perturbative invariants of 3-manifolds introduced in 
\cite{AS, AS2, BC} may be defined by means of factorization complex
considered in \cite{BD, L,F, G}.  To get these invariants 
one has to calculate factorization homology of 
Weyl $n$-algebra, which is an object of independent interest.

An important property of Weyl $n$-algebras is that their factorization 
homology on a closed manifold is one-dimensional (Theorem \ref{one}).
It would be plausible to find some conceptual proof of this statement,
perhaps by using some kind of Morita invariance of factorization homology.
As far as I know, such arguments are unknown even in the classical situation,
when $n=1$. Besides, as I learned from O.~Gwilliam, in \cite{OG}
it is shown, that
the factorization algebra of any ``free'' BV theory has 
one-dimensional factorization homology over a closed manifold, which implies the result for the Weyl case.

Weyl $n$-algebras may be applied to the differential calculus in the sense 
of \cite{TT}.
For example, the $\Li$-morphism from the Lie algebra of polyvector
fields on a vector space, which is a Weyl $2$-algebra, to the
Lie algebra of endomorphisms of differential forms
on it (see e.~g. \cite{TT}) is given by the map analogous to 
the one from Proposition \ref{modules} for a 2-dimensional
cylinder. We hope to discuss this elsewhere.

I would like to draw the readers attention  to recent papers \cite{CPTVV} and \cite{GH},
which are closely related to  the present one.

In the first section, we shortly recall the definition of operad
and module over it, just to introduce notations. We send the reader to e.~g. \cite{L} 
for  a detailed treatment.

In the second section, we collect facts about Fulton--MacPherson operad
 and $\Li$ operad we need. 

Section \ref{fh} is devoted to the factorization complex.
There is nothing new here, this notion is deeply discussed in \cite{L}.
We use the Fulton-MacPherson compactification following  \cite{PS} and others.
In Subsection \ref{fhlah} a connection between Lie algebra
homology and the factorization complex is described. 
Proposition \ref{modules} interprets this connection
in terms of a morphism of right $\Li$-modules. The right
$\Li$-module $C_*(\fCS{M}{S})$
is similar to the Goodwillie derivative of the functor
$\Sigma^\infty\Hom_{\mathcal{T}op_*}(M,-)$
(see e.~g. \cite{AAC} and references therein).
This right $\Li$-module has an additional structure:
it is a pull back of a right $e_n$-module under the map
of operads $\Li\to e_n$ (compare with $\mathrm{KE}_L$-modules from \cite{AC}). 
It seems that invariants of manifolds we introduce below
reflect this additional structure.
 
In the Section  \ref{Wna} we introduce Weyl $n$-algebras.
The Euler structure on a
manifold, which we introduce in Subsection \ref{euler},
simplifies definition
of the factorization complex of a Weyl $n$-algebra on it.
I do not know, whether this is just a technical point, or
it has some deep relations with \cite{Tu}, where the term
is taken from.

As was already mentioned, factorization homology 
of a Weyl algebra on a closed manifold
is one-dimensional. It is easy to  produce a cycle 
presenting this the only class.
A more subtle and interesting question is to find a cocycle representing the class
dual to this cycle, which is an element of the dual complex.
For $n=1$, $M=S^1$ and generators of $\We$ of zero degrees
this question is solved in \cite{FFS}. 

If such a formula existed for any $n$ and $M$, it would
substantially simplify the last section, where we apply
Weyl $n$-algebras to the calculation of invariants of a manifold.
Instead of using the non-existent aforementioned formula we analyze
what happens with factorization homology when 
we collapse a homological sphere. 
The formula we get is similar to the one in \cite{AS} and \cite{BC}.

In Appendix I informally explain how our definition of invariants 
matches with the initial physical definition via path integral.

Methods and results of this paper may be naturally generalized for a manifold
with boundary or a pair of a closed 3-manifold with a link in it. I hope to elaborate
on these points in future papers.

\smallskip

{\bf Acknowledgments.} I am grateful to G.~Arone, P.~Bressler, D.~Calaque, A.~Cattaneo,
B.~Feigin, G.~Ginot, D.~Kaledin, A.~Kalugin and A.~Khoroshkin  for fruitful discussions. 
I deeply thank the referee for providing constructive comments and help in improving the contents of this paper.

\section{Operads}

\subsection{Definition}
Let $C$ be a symmetric monoidal category with product $\otimes$,
$\Set_{\hookrightarrow}$
be the category of finite sets and injective morphisms and $\SetIso$
be the category of finite sets and isomorphisms. 

A {\em unital operad}
$\Op$ in $C$ is defined by the following:
\begin{itemize}
\item A contravariant functor $\Op$ from the category of finite sets
and injective morphisms  $\Set_{\hookrightarrow}$ to $C$, the image
of the set of $k$ elements is called {\em operations of arity} $k$,
\item For any surjective morphism of sets $p\colon S\to S'$
a morphism called {\em composition} of operation is given
$$
\mul_p\colon \Op(S')\otimes \bigotimes_ {i\in S'}\Op(p^{-1}(i)) \to \Op(S) ,
$$
such that
\begin{itemize}
\item it is functorial with respect to injective morphisms $i\colon S_0\to S$
for which composition $p\circ i$ is surjective
\item  for any pair of surjective morphisms $S\stackrel{p}{\to}S'\stackrel{p'}{\to}S''$
equality
$$
\mul_{p' \circ p}=\mul_{p'}\circ\bigotimes_{i\in S'} \mul_{p^{-1} i} 
$$
holds.
\end{itemize}
\end{itemize}

A {\em non-unital operad} is defined by the same data, but with $\SetInj$
replaced by $\SetIso$, the category of finite sets and isomorphisms.

Any unital operad canonically produces a non-unital one by forgetting structure.

With any unital operad $\Op$ in $C$ one may associate the monoidal category 
$\Op^{\otimes}$
enriched over $C$. Its objects are labeled by finite sets.
A morphism between objects labeled by $S$ and $S'$
is given by a map $m\colon S\to S'$ of finite sets 
together with a collection 
$
\{\phi_i\in\Op(m^{-1}(i)) \,|\, i\in S'\}.
$ 
The composition of elements of $\Mor\nolimits_{\Op^{\otimes}} (S,S')$
given by surjective maps of sets is given by the 
composition of operations,
and composition with ones given by injective morphisms
is given by the action of $\Set_{\hookrightarrow}$.
For a non-unital operad the construction is the same, but the product
is taken only over surjective morphisms.

The operad may be reconstructed from the monoidal category $\Op^\otimes$ fibered 
over $\Set$.

A {\em colored operad} with a set of colors $B$ is a generalization of an operad. 
In the same way, it produces a monoidal category with objects labeled by $B^S$,
where $S$ runs over finite sets. So operations in a colored
operad are enumerated by finite set $\set{n}$ and a point in $B^{n+1}$ and composition
is a morphism from the fibered product over $B$. For details see e.~g. \cite[2.1.1]{L}.

We will consider operads fibered over the category of topological spaces, its definition is an obvious
modification of the previous one.  
\subsection{Modules}

\begin{definition}
For  $\Op$ an operad, a {\em left (right) $\Op$-module} in a category $M$ 
is a covariant (contravariant) functor from $\Op^{\otimes}$ to $M$.
\end{definition}

Let $D$ be a symmetric monoidal category with unit  $\mathds{1}$ 
and $\Op$ is an operad in 
a symmetric monoidal category $C$.
Given an object $A$ in $D$
and an element $e\colon \mathds{1}\to A$ there are natural functors
$\SetIso\to D$ and $\SetInj\to D$. The first one sends a set $S$ 
to $A^{\otimes S}$ with the natural action of isomorphisms of $S$.
The second one sends set $S$ to $A^{\otimes S}$ as well and
a morphism $S'\hookrightarrow S$ sends to
\begin{equation}
e^{\otimes (S\setminus S')}\otimes\id^{\otimes S}\colon A^{\otimes S'} \to A^{\otimes S}.
\end{equation}

\begin{definition}
{\em An algebra $A$ over a non-unital operad $\Op$} in $D$  is a left module over $\Op$
in $D$ such that its restriction  to $\SetIso$ is the functor as above.

{\em A algebra $A$ with unit $e\colon \mathds{1}\to A$
 over a unital operad $\Op$} in $D$ is a left module over $\Op$ 
in $D$ such that its restriction to $\SetInj$ is the functor as above.

Denote these modules by $A^{\otimes}$.
\end{definition}

Let $\Op$ be a dg-operad, that is an operad in the category of complexes.

\begin{definition}
Let $\Op$ be a dg-operad and $L$ and $R$ be a left and a right dg-modules over it.
Then the {\em tensor product} $L\otimes_{\Op}R$ of modules over the operad
is the tensor product of functors corresponding to modules from
$\Op^{\otimes}$ to the category of complexes.  
\label{tp}
\end{definition}

The definition works for both unital and non-unital and also colored operads. 
Given a unital operad $\Op$ denote by $\tilde \Op$ the corresponding
non-unital operad. The canonical  embedding 
$\tilde{\Op}^{\otimes}\hookrightarrow\Op^{\otimes}$
induces $\tilde{\Op}$-structure on any left and a right $\Op$-modules $L$ and $R$ 
the canonical map
\begin{equation}
L\otimes_{\tilde\Op}R\to L\otimes_{\Op}R.
\end{equation}

\section{Fulton-MacPherson operad}

\subsection{Fulton-MacPherson compactification}
\label{FMC}

Let $\R$ be an affine space. For a finite set $S$ let denote by $(\R)^S$ the set of ordered $S$-tuples in $\R$.
Let $\Cs{\R}{S}\subset (\R)^S$ 
be the configuration space of distinct ordered points in $\R$ labeled by $S$.
In \cite{GJ, Ma} (see also \cite{PS} and \cite{AS}) the Fulton--MacPherson 
compactification 
$\CS{\R}{S}$ of $\Cs{\R}{S}$ is introduced.
This is a manifold with corners and a boundary with interior
$\imath\colon \Cs{\R}{S}\hookrightarrow \CS{\R}{S} $.
There is a projection $\pi\colon \CS{\R}{S}\to (\R)^S$
such that $\pi\circ\imath\colon\Cs{\R}{S} \to (\R)^S $ is the natural embedding.

For any $S'\subset S$ there is the projection map
\begin{equation}
 \CS{\R}{S} \to \CS{\R}{S'},
\end{equation}
compatible with the same maps $\Cs{\R}{S} \to \Cs{\R}{S'}$ and 
$(\R)^S\to (\R)^{S'}$.

The natural action of the group of affine transformations on $\Cs{\R}{S}$ is lifted to
$\CS{\R}{S}$. Denote by $\Dil$ its subgroup consisting of dilatations and shifts. 
Group $\Dil$ acts freely on $\CS{\R}{S}$ and the quotient is isomorphic to
the fiber $\pi^{-1}(\vec{0})$, where $\vec{0}\in(\R)^S$ is the $S$-tuple
sitting at the origin.
To build this isomorphism consider dilatations with positive coefficients
with the center at the origin: $\mathbb R_{>0}\times \Cs{\R}{S}\to \Cs{\R}{S}$.
By the construction of the compactification their action is lifted to  
$r\colon\mathbb R_{\ge 0}\times \CS{\R}{S}\to \CS{\R}{S}$, which is a fiber bundle.
The map $r(0\times -)$ factors through the quotient by $\Dil$ 
and its image  lies in $\pi^{-1}(\vec{0})$.
This gives the required isomorphism.
It follows that $\pi^{-1}(\vec{0})$ is a retract of $\CS{\R}{S}$.

As it is just mentioned, manifolds with corners $\CS{\R}{S}/\Dil$ and $\pi^{-1}(\vec{0})$ are isomorphic.
Denote any of these manifolds by $\FM^{S}$.
The sequence of manifolds $\FM^{S}$ is a contravariant functor from $\Set_{\hookrightarrow}$
to topological spaces: the map corresponding to an embedding of sets
forgets points that are not in its image.
The sequence $\FM^{S}$ may be equipped with a structure
of a unital operad in the category of topological spaces. This operad is a free
as an operad of sets and as such is generated by quotients of $\Cs{\R}{S}\hookrightarrow\CS{\R}{S}$
by $\Dil$.
The action of $k$-ary operations $\Cs{\R}{\set{k}}/\Dil$ on $\CS{\R}{S}$
looks as follows. Consider the submanifold  of $\CS{\R}{S}$ for which
the image of $\pi\colon \CS{\R}{S}\to (\R)^S $ consists exactly of $k$ different points. 
This submanifold is isomorphic to $\Cs{\R}{\set{k}}\times \pi^{-1}(\vec{0})$
because fibers of $\pi$ over any point are isomorphic due to parallel translations.
The embedding of this submanifold to $\CS{\R}{S}$ in composition with
the quotient by $\Dil$ gives a map
$$
\CS{\R}{\set{k}}/\Dil\times {(\FM)}^{\times k}\to \CS{\R}{\bullet}/\Dil=\FM,
\label{assembly}
$$
which is the desired action, where $\set{k}$ is the set of $k$ elements.

\begin{definition}
The sequence of topological spaces $\FM^{S}$ with the unital operad structure as above
is called the {\em Fulton--MacPherson operad}.
\end{definition}

\subsection{Chains of Fulton-MacPherson operad}

Given a topological operad, one may produce a dg-operad by
taking complexes of chains of its components.

\begin{definition}
Denote by $\fm$ the operad of $\mathbb R$-chains of $\FM$. 
\label{chains}
\end{definition}

Real numbers appear here are to simplify things, in fact all object
and morphism we shall use may be defined over rationals, see remark before Example \ref{moyal} 
below.

By chains we mean the complex of de Rham currents, that is why we need real chains.
Alternatively, one may think about the cooperad of de Rham cochains of $\FM$.

\begin{prop}
Operad $\fm$ is weakly homotopy equivalent to $e_n$,
the operad of chains of the little discs operad.
\label{eq}
\end{prop}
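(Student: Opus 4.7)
The plan is to reduce the statement to a weak equivalence of topological operads $\FM \simeq D_n$, where $D_n$ denotes the little $n$-discs operad whose chains by definition yield $e_n$, and then apply the chains functor. Since every $\FM^S$ is a manifold with corners (resp.\ $D_n(S)$ is a smooth manifold) of the homotopy type of a finite CW complex, the de Rham currents functor sends weak homotopy equivalences to quasi-isomorphisms, and it is symmetric monoidal. So an equivalence of topological operads propagates to a weak equivalence of the dg-operads $\fm$ and $e_n$.

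For the topological comparison, I would first establish the arity-wise weak equivalences. On the Fulton--MacPherson side, the inclusion of the interior $\Cs{\R}{S}/\Dil \hookrightarrow \FM^S$ is a homotopy equivalence, since $\FM^S$ is obtained from $\Cs{\R}{S}/\Dil$ by an iterated real blow-up along diagonals: the added boundary strata may be deformation retracted onto the interior along the radial collar. On the little discs side, the map $D_n(S) \to \Cs{\R}{S}/\Dil$ sending a configuration of disjoint discs to the $\Dil$-class of their centers is a homotopy equivalence, with homotopy inverse obtained by attaching to each point a small disc of uniform radius. Composing gives, for each $S$, a zig-zag of homotopy equivalences between $\FM^S$ and $D_n(S)$, equivariant for the symmetric group action.

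The main obstacle is promoting this arity-wise comparison to a zig-zag of maps of topological operads. The Fulton--MacPherson composition, as assembled in the paper via $\CS{\R}{\set{k}}/\Dil \times \FM^{\times k} \to \FM$, uses parallel transport in the fibers $\pi^{-1}(\vec{0})$ of $\pi$, whereas the composition in $D_n$ uses affine rescaling to insert small discs into a big one; the center map intertwines them only up to coherent homotopy. The classical way to rigidify this is to introduce an intermediate topological operad---for instance, a Boardman--Vogt $W$-resolution of $D_n$, or the operad of ``little discs with blow-up data'' used by Kontsevich and Salvatore---and exhibit operad weak equivalences from this intermediate object to both $\FM$ and $D_n$. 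I would invoke this classical comparison (in the spirit of \cite{Ma, PS}) rather than reproduce it, and then conclude by applying the chains functor to the resulting zig-zag of operadic weak equivalences.
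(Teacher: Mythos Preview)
Your proposal is correct and aligns with the paper's own treatment: the paper's proof consists solely of the citation ``See \cite[Proposition 3.9]{PS} and Subsection \ref{disk} below,'' i.e.\ it defers entirely to Salvatore's comparison of $\FM$ with the little discs operad, which is precisely the classical result you invoke at the end. Your sketch of the arity-wise equivalences and the need for an intermediate operad to rigidify the operadic structure is a faithful outline of what that cited argument contains, so you have supplied more detail than the paper itself while following the same route.
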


\begin{proof}
See \cite[Proposition 3.9]{PS} and Subsection \ref{disk} below.
\end{proof}

Spaces $\FM^{S}$ are acted on by the general linear group, and, in particular,
by its maximal compact subgroup $\gl$, we suppose that a scalar
product on the space is chosen. The semidirect product $\FM\rtimes\gl$ 
is called the operad of framed disks $\fFM$. Any operad is equipped with a natural
structure of an operad colored over the classifying space of its invertible 1-ary elements.
In this way, we will consider 
$\fFM$ as an operad colored by the classifying space $B\gl$.

\begin{definition}
Denote by $\ffm$ the operad of $\mathbb R$-chains of $\fFM$.
\label{ffm} 
\end{definition}

The closely connected, but not identical object is the operad of framed disks
from \cite{Get}. And much like with Definition \ref{chains}, real numbers may be replaced with rational for our purposes.

Operations of arity $s$ of $\ffm$ form complexes over  $B\gl^{s+1}$. 
An algebra over $\ffm$ is given by a family of complexes over appropriate powers of $B\gl$.
Below we will need only the following restrictive, but a simpler class of such algebras.

\begin{definition}
We say that a dg-algebra $A$ over $\fm$ is {\em invariant},
if all structure maps of complexes
$$
\fm \otimes A\otimes\cdots\otimes A\to A
$$
are invariant under the action of group $\gl$ on complexes of operations
of $\fm$. An invariant algebra over $\fm$ is naturally an algebra over $\ffm$. 
\label{inv}
\end{definition}

Note, that we mean invariance on the level of complexes, not up to homotopy.
An important class (and the only class we need, in fact) of invariant $e_n$-algebras is 
universal 
enveloping $e_n$-algebras, see the end of the next Subsection.

\subsection{$\Li$ operad}
\label{li}

A {\em tree} is an oriented connected graph with three type of vertices: the
{\em root} has one incoming edge and no outgoing ones, {\em leaves}
have one outgoing edge and no incoming  ones and {\em internal
vertexes} have one outgoing edge and more than one incoming ones.
Edges incident to leaves will be called {\em inputs}, the edge incident
to the root will be called the {\em output} and all other edges will be called {\em internal
edges}. The degenerate tree has one edge and no internal vertexes.
Denote by $\T_k(S)$ the set of non-degenerate trees with $k$ internal
edges and leaves labeled by a finite set $S$.

For two trees $t_1\in T_{k_1}(S_1)$ and $t_2\in T_{k_2}(S_2)$ and an element $s\in S_1$
the composition of trees $t_1\circ_s t_2\in T_{k_1+k_2+1}$ is obtained by identification of 
the input of $t_1$
corresponding to $s$ and the output of $t_2$. Composition of trees is associative and
the degenerate tree is the unit.  The set of trees with respect to the composition forms an 
operad.

We call a tree with only one internal vertex the {\em star}.
Any  non-degenerate tree with $k$ internal edges may be uniquely presented as a composition of
$k+1$ stars.

The operation of {\em edge splitting} is the following: take
a non-degenerate tree, present it as a composition of stars and
replace one star  with a tree that is  a product of two stars and
has the same set of inputs. The operation of  an edge splitting
depends on an internal vertex and a proper subset of incoming edges with 
more than one element.

For a non-degenerate tree $t$ denote by $\Det(t)$ the
one-dimensional $\mathbb Q$-vector space that is the determinant of
the vector space generated by internal edges. For $s>1$ consider the
complex
\begin{equation}
L(s)\colon \bigoplus_{t\in T_0(\set{s})} \Det (t) \to
\bigoplus_{t\in T_1(\set{s})} \Det(t) \to \bigoplus_{t\in
T_2(\set{s})} \Det (t) \to \cdots,
 \label{complex}
\end{equation}
where $\set{s}$ is the set of $s$ elements,
the cohomological degree of a tree $t\in T_k(\set{s})$ is $2-s+k$ and the  
differential is given by all possible splittings  of an edge (see e.~g.
\cite{GK}). The composition of trees equips the sequence
$L(i)\otimes \sgn$ with the structure of a non-unital $dg$-operad, here $\sgn$ is the
sign representation of the symmetric group.

This operad is called the {\em $\Li$ operad}.
For simplicity denote by the same symbol the operad $\Li\otimes_{\mathbb Q}\mathbb{R}$,
it will be clear from the context which one is meant.
Denote by $\Li[n]$ the
$dg$-operad given by the complex $L(s)[n(s-1)]\otimes(\sgn)^n$
and refer to it as $n$-shifted $L_\infty$ operad.

As $\FM$ is freely generated  by $\Cs{\R}{S}/\Dil$
as the operad of sets, there is a map $\mu$ from it to the free
operad with one generator in each arity, which sends  generators to generators.
Elements of the latter operad are enumerated by rooted trees. The map above sends
$\C_{\set{k}}^0(\R)/\Dil$ to the star tree with $k$ leaves.
For a tree $t\in\T(S)$ denote by 
$[\mu^{-1} (t)]\in C_*(\F_n(S))$ the chain presented by its preimage under $\mu$.

\begin{prop}
Map $[\mu^{-1} (\cdot)]$ as above gives a morphism 
from shifted $L_\infty$ operad $L(s)[s(1-n)]$  
to the $dg$-operad $\fm$ of chains 
of the Fulton--MacPherson operad.
The last operad here is treated as a non-unital one. 
\label{tree}
\end{prop}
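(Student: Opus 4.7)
The plan is to identify, for each tree $t \in \T_k(\set{s})$, the preimage $\mu^{-1}(t)$ with an oriented codimension-$k$ stratum of $\FM^s$, and then verify that the geometric boundary and operadic composition on these strata reproduce the shifted $L_\infty$ differential and composition.

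First I would set up the geometry. Writing $t$ as a composition of $k+1$ stars, the preimage $\mu^{-1}(t)$ is the image in $\FM^s$ of the iterated operadic insertion applied to the open cells $\Cs{\R}{\set{a_i}}/\Dil$, one for each internal vertex $i$ of $t$ of arity $a_i$. The result is a smooth submanifold of real dimension $ns - n - 1 - k$ (one less for each internal edge), carrying a canonical de Rham current $[\mu^{-1}(t)]$. The determinant line $\Det(t)$ supplies its orientation: an ordering of the internal edges induces an ordering of the star factors beyond the root, which orients the product of open configuration cells compatibly with a fixed orientation of $\R$.

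Second, I would verify compatibility with the differential. The key geometric input, which is the defining property of the Fulton--MacPherson stratification, is that the codimension-one degenerations in the closure $\overline{\mu^{-1}(t)}$ correspond bijectively to edge splittings of $t$: when a sub-cluster of points further collapses, a new internal edge is created. Hence at the level of currents,
\[
\partial [\mu^{-1}(t)] \;=\; \sum_{t'} \pm [\mu^{-1}(t')],
\]
where the sum runs over edge splittings $t'$ of $t$. The shift $[s(1-n)]$ is arranged so that each added edge, which drops the geometric dimension by exactly one, raises the cohomological degree by exactly one on the $L_\infty$ side.

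Third, I would check operadic compatibility. For a surjection $p\colon S \to S'$ and trees $t_0, \{t_i\}_{i \in S'}$, the product stratum $\mu^{-1}(t_0) \times \prod_{i \in S'} \mu^{-1}(t_i)$ maps diffeomorphically onto $\mu^{-1}(t_0 \circ (t_i))$ under the operadic composition map of $\FM$, and pushforward of currents then yields the required compatibility with composition in $\fm$. Treating $\fm$ as a non-unital operad is essential here, and matches the non-unital structure of $\Li$; this step is a direct consequence of the free-operad description of $\FM$ that defines $\mu$ in the first place.

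The main obstacle is sign bookkeeping. One must check (a) that the orientations on boundary strata coming from the product decomposition agree, up to the signs appearing in the $\Li$ differential, with those induced by $\partial$ of the ambient stratum; and (b) that permuting the leaves of $t$, which on the geometric side permutes points in $\R$, contributes the sign $(-1)^n$ per transposition absorbed into the sign-representation twist of the shifted $L_\infty$ operad. Both computations follow the pattern carried out for the Fulton--MacPherson compactification in \cite{GJ} and \cite{Ma}, and are the only real content beyond the combinatorial bookkeeping of strata.
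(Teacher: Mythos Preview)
Your proposal is correct and follows essentially the same route as the paper: identify $\mu^{-1}(t)$ with a stratum of $\FM^s$, observe that codimension-one boundary strata correspond exactly to edge splittings of $t$, and match the induced orientations with the signs in the $\Li$ complex via the internal-edge basis of the conormal bundle. The paper's proof is terser and omits the explicit dimension count and the operadic-composition check (treating the latter as immediate from the free-operad description of $\FM$), but the argument is the same.
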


\begin{proof}
To see that the map commutes with the differential, note, that two strata given by $\mu$ with 
dimensions differing by 1
are incident if and only if  one of the corresponding trees is obtained from another by  edge splitting.
In this way, we get a basis in the conormal bundle to a stratum labeled by the internal edges,
it follows that orientations on the chains of the boundary of a stratum match
 signs in the complex (\ref{complex}). 
\end{proof}
It follows that there is a morphism of $dg$-operads
\begin{equation}
\Li[1-n]\to \fm
\label{morphism}
\end{equation}

\begin{definition}
For a $\fm$-algebra $A$ call its pull-back under (\ref{morphism})
the {\em associated $\Li$-algebra} and denote it by $\LL(A)$.
\label{la}
\end{definition}

Since the operad $\fm$ is weakly homotopy equivalent to $e_n$
(Proposition \ref{eq}),
it gives a homotopy morphism of operads $\Li[1-n]\to e_n$.

This morphism of operads produces a functor from the category
of $e_n$-algebras to that of $\Li$-algebras. This functor has a left adjoint,
which is called the universal enveloping $e_n$-algebra.
The important example of the latter is the complex of rational 
chains of an iterated loop space $\Omega^n X$, which is a universal enveloping 
$e_n$-algebra of the homotopy groups Lie algebra $\pi_{*-1}(X)$,
for more details see e.~g. \cite[Section 5]{F}.
Note, that $\Omega^n X$ is equipped with a natural $SO(n)$ action.
This is in good agreement with the fact that any universal enveloping
$e_n$-algebra is invariant.

\section{Factorization homology}
\label{fh}
\subsection{Factorization complex}
\label{FH}

Let  $M$ be a $n$-dimensional oriented topological manifold.
In the same way, as for $\R$  there is the
Fulton--MacPherson compactification
$\CS{M}{S}$ of the space $\Cs{M}{S}$ of ordered pairwise distinct points in $M$
labeled by $S$. Locally it is the same thing.
Inclusion $\Cs{M}{S}\hookrightarrow \CS{M}{S}$ is a homotopy equivalence,
there is  a projection $\CS{M}{S} \stackrel{\pi}{\to} M^S$.

Recall that a point in the Fulton--MacPherson compactification $\CS{\R}{S}$ of 
the configuration space of $\R$
looks like a configuration from the configuration space $\Cs{\R}{S'}$
with elements of $\FM$ sitting at each point of the configuration.
It follows that spaces $\CS{\R}{\bullet}$ form a right $\FM$-module, 
which  is freely generated by $\Cs{\R}{\bullet}$ as a set.
The same is nearly true for the Fulton--MacPherson compactification
of any oriented manifold $M$. But to define such an action one needs to choose
coordinates at the tangent space of any point of a configuration of $\CS{M}{S}$.
To fix it one has to consider either only framed
manifolds or introduce framed configuration space.

\begin{definition}
The framed Fulton--MacPherson compactification $\fCS{M}{S}$ is the principal
$\gl^S$ bundle over $\CS{M}{S}$, which is the pullback of product of principal
bundles associated with the tangent bundles to each point under the projection map
$\pi \colon \CS{M}{S}\to M^{S}$.
\label{ffh}
\end{definition}

The chain complex  $C_*(\fCS{M}{S})$
over $B\gl^{S}$ for various $S$ make up a right $\ffm$-module 
(see Definition \ref{ffm}).

\begin{definition}
For an algebra $A$ over $\ffm$  and an oriented manifold $M$ the {\em factorization complex}
$\int_M A$ is the tensor product (Definition \ref{tp}) of the left $\ffm$-module
$A^\otimes$ and the right $\ffm$-module $C_*(\fCS{M}{S})$.
\label{definition}
\end{definition}

The homology of $\int_M A$ is called the {\em factorization homology} of $A$ on $M$.

For an invariant $\fm$-algebra (Definition \ref{inv}) the definition of the factorization complex may be rephrased as follows.

\begin{prop}
For an invariant unital $\fm$-algebra $A$ and an oriented manifold $M$ the
factorization complex $\int_M A$ is
the complex given by the colimit of the diagram
\begin{equation}
\begin{CD}
\bigoplus\limits_{S'} C_*(\CS{M}{S'})\tens{Aut(S')} A^{\otimes S'}\\
@AAA \\
\bigoplus\limits_{i\colon S'\to S}C_*(f\Cs{M}{S})\tens{\gl^S\rtimes Aut(S)}\bigotimes\limits_{s\in 
S} 
(\fm(i^{-1}s)\tens{Aut(i^{-1}s)} A^{\otimes 
{(i^{-1}s)}}) \\
@VVV \\
\bigoplus\limits_{S}C_*(\Cs{M}{S})\tens{Aut(S)} A^{\otimes S} 
\end{CD}
\label{coend}
\end{equation} 
where the summation in the middle runs over maps between finite sets,
the downwards arrow is given by the left action of $\fm$ on A for
$\Im i$ and the unit for $S\setminus \Im i$
and the upwards arrow is given by the right action of $\fm$ on 
$C_*(f\CS{M}{\bullet})$.
\label{colimit}
\end{prop}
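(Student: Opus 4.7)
The plan is to unwind the factorization complex as a coend over $\ffm^\otimes$ and then show that invariance reduces this coend to the pushout of the two arrows displayed in (\ref{coend}).

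First, by Definitions \ref{tp} and \ref{definition}, $\int_M A$ is the coend over $\ffm^\otimes$ of the bifunctor $S \mapsto C_*(\fCS{M}{S}) \otimes A^{\otimes S}$. A morphism in $\ffm^\otimes$ is a map $i: S' \to S$ of finite sets together with operations $\phi_s \in \ffm(i^{-1}s)$. Invariance (Definition \ref{inv}) says that the structure maps of $A$ are insensitive to the $B\gl$-colouring, so the $\gl^S$-action on $A^{\otimes S}$ is effectively trivial and each summand $C_*(\fCS{M}{S}) \otimes A^{\otimes S}$ of the coend may be further quotiented by the free $\gl^S$-action on $\fCS{M}{S}$. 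Since $\fCS{M}{S}/\gl^S = \CS{M}{S}$ by Definition \ref{ffh} (and similarly in the open case), the bare summands become $C_*(\CS{M}{S}) \otimes A^{\otimes S}$ and $C_*(\Cs{M}{S}) \otimes A^{\otimes S}$, exactly the top and bottom entries of (\ref{coend}).

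Second, I would exhibit the resulting $\fm$-coend as a pushout. Every morphism in $\fm^\otimes$ factors as an isomorphism, an injection of finite sets (giving the unit insertions built into the unital structure), and a surjection (giving operad composition). Isomorphisms yield precisely the $Aut(S)$- and $Aut(S')$-coinvariants present in the diagram. The remaining generators are captured simultaneously by indexing the middle term over arbitrary (not necessarily injective or surjective) maps $i: S' \to S$ together with a choice of $\fm(i^{-1}s)$ at each $s \in S$; this is the common source of the two pushout arrows. The downward arrow applies the left action of $\fm$ on $A$ componentwise, contracting each $\fm(i^{-1}s) \otimes A^{\otimes (i^{-1}s)}$ to one copy of $A$, with points in $S \setminus \Im i$ contributing the unit, producing $A^{\otimes S}$ paired with the configuration in $\Cs{M}{S}$. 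The upward arrow uses the right action of $\fm$ on $C_*(\fCS{M}{\bullet})$, which by construction of the Fulton--MacPherson compactification inserts the operations at the chosen distinct points to land in $C_*(\fCS{M}{S'})$; passing to $\gl^{S'}$-coinvariants via invariance yields $C_*(\CS{M}{S'}) \otimes A^{\otimes S'}$.

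The main obstacle is the careful bookkeeping of the $\gl$-actions: the reduction from the $\ffm$-coend to a pushout involving only bare $\fm$-operations relies on invariance making the $\gl^S$- and $\gl^{S'}$-quotients compatible with both the left and right module structures, so that the two arrows descend through the $\gl^S \rtimes Aut(S)$-coinvariants at the middle term. A secondary point is verifying that the three terms listed are enough to present the coend, rather than needing all iterated morphisms; this uses that $\FM$ is freely generated as a set-operad by $\Cs{\R}{S}/\Dil$ (Subsection \ref{FMC}), so the relations coming from iterated operad compositions are automatically implied by the ones coming from single compositions indexed by $i: S' \to S$.
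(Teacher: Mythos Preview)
Your proposal is correct and follows the same approach as the paper, which proves the proposition in a single sentence: ``The formula is a direct interpretation of Definition \ref{definition}.'' You have simply spelled out what that direct interpretation entails --- the reduction from $\ffm$ to $\fm$ via invariance, the factorization of morphisms in $\fm^\otimes$, and the use of free generation of $\FM$ over $\Cs{\R}{\bullet}/\Dil$ to see that a single layer of relations suffices --- all of which the paper leaves implicit.
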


\begin{proof}
The formula is a direct interpretation of Definition \ref{definition}.
\end{proof}

If the manifold is framed, that is its tangent bundle is trivialized,
the definition may be simplified: one should substitute  $\Cs{M}{S}$
instead of $f\Cs{M}{S}$ and remove  $\gl$ from the tensor product.

Since the upwards arrow in (\ref{coend}) is an isomorphism of underlying vector spaces, for any  class of the colimit above 
there is a unique chain downstairs, which is in the interior
of the Fulton--MacPherson compactification, that is in a configuration
space of distinct points. Thus on the complex (\ref{coend}) 
(that calculates the factorization homology) there is
an increasing filtration by the number of points of the configuration
space and the associated graded object is $\bigoplus_{S} C_*(\Cs{M}{S})\otimes A^{\otimes S}$.

Note, that this filtration splits as a filtration of vector spaces.
Thus, any morphism from or to the factorization complex
may be presented as the one for all graded  pieces of the filtration
consistent in a proper way.

The definition above may be again rephrased as follows. 
Denote by $\Ran(M)$ the {\em Ran space} 
of $M$,
that is the set of finite subsets of $M$ with the natural topology.
There is a natural map $M^{\times i}\to \Ran(M)$, which sends a set of points to its support.
Denote the composite map $\CS{M}{\set{i}}\to M^{\times i}\to \Ran(M)$ by $\ranp_i$.
The fiber of this map is the product of some copies
of the Fulton--MacPherson operad. Take a $\fm$-algebra $A$ and consider
chains $\bigoplus_i C_*(\CS{M}{\set{i}})\otimes_{\Sigma_i} A^{\otimes i})$ modulo 
relations (\ref{coend}). As all
relations respect $\ranp_*$, for any open subset of
the complex of these chains modulo relations is defined;
being restricted $\Cs{M}{\set{i}}\hookrightarrow \Ran(M)$ this complex
equals to $C_*(\Cs{M}{\set{i}})\otimes_{\Sigma_i}\otimes A^{\otimes i}$.
The way these complexes are glued together
defines a cosheaf (see e.~g. \cite{Cur}) on the Ran space. The factorization
homology is homology of this cosheaf, for details see \cite{L}.

\subsection{Polynomial algebra}
\label{comm}
Any commutative algebra 
canonically is an algebra over chains of any topological operad, because it is 
the operad of chains of the terminal object in the category of topological operad.
In particular, any commutative algebra is an $\fm$-algebra over and it is invariant.
 
Let $A$ be the polynomial algebra $k[V]$ generated by a $\mathbb{Z}$-graded vector space $V$
over the base field $k$ of characteristic zero containing $\mathbb R$. Its factorization complex
$\int_M A$ is a 
commutative algebra because 
any commutative algebra is a commutative algebra in the category
of commutative algebras. The multiplication in $\int_M A$ is given by taking unions
of points in $M$ and multiplication of labels for coinciding points.  

\begin{prop}[see { \cite[Ch. 4.6]{BD}}, \cite{GTZ}]
$\int_M A=k[H_*(M)\otimes V]$, where $H_*(M)$ is the integer homology
 groups of $M$ negatively graded. 
\label{com}
\end{prop}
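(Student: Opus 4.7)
The plan is to use the commutativity of $A=k[V]$ to collapse the coend (\ref{coend}) to a derived symmetric power, and then to apply K\"unneth in characteristic zero.

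First, since $A$ is commutative every structure map in (\ref{coend}) factors through the augmentation $\fm\to\Comm$ onto the commutative operad, whose spaces of operations are all one-dimensional. Under the quotient by the relations in (\ref{coend}), a configuration with two points close together carrying labels $a,b\in A$ is identified with the configuration obtained by collapsing those two points and labeling the result by $ab$; commutativity is exactly what makes this identification well-defined. The Fulton--MacPherson boundary strata of $\CS{M}{S}$ are thereby absorbed, and the coend reduces to
\begin{equation*}
\int_M A \simeq \bigoplus_{n\geq 0}\bigl(C_*(M^n)\otimes A^{\otimes n}\bigr)_{\Sigma_n}.
\end{equation*}

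Second, I would reduce to a single generator. Since $\int_M$ is symmetric monoidal on commutative algebras, $\int_M(A_1\otimes A_2)\simeq \int_M A_1\otimes \int_M A_2$, and the decomposition $k[V]=\bigotimes_v k[v]$ over a basis of $V$ reduces us to $V=k\cdot v$ one-dimensional. In that case $A^{\otimes n}$ is of rank one as a $\Sigma_n$-module in each multi-degree, so K\"unneth identifies the coinvariants above with $\mathrm{Sym}^n(C_*(M)\otimes V)$. Summed over $n$, this is the derived symmetric algebra on $C_*(M)\otimes V$.

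Third, in characteristic zero $\Sigma_n$-coinvariants are exact, so the symmetric algebra construction commutes with taking homology, yielding
\begin{equation*}
H_*\Bigl(\int_M k[V]\Bigr)=\mathrm{Sym}\bigl(H_*(M)\otimes V\bigr)=k[H_*(M)\otimes V],
\end{equation*}
with $H_*(M)$ placed in negative cohomological degrees, as in the statement.

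The main obstacle is the first step: verifying rigorously that the Fulton--MacPherson strata are exactly collapsed by the commutative multiplication requires a careful analysis of the coend. An appealing alternative is to invoke the cosheaf description on the Ran space from the end of Subsection \ref{FH}, since for commutative $A$ the corresponding factorization cosheaf is forced to be $\mathrm{Sym}$ of the constant cosheaf with stalk $V$, and this directly computes to $\mathrm{Sym}(C_*(M)\otimes V)$.
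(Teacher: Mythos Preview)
Your overall strategy---collapse the Fulton--MacPherson strata using commutativity, then identify the result with a symmetric algebra---is exactly the paper's approach.  But your execution contains a genuine gap at the point you yourself flag.

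The displayed formula in your first step,
\[
\int_M A \simeq \bigoplus_{n\ge 0}\bigl(C_*(M^n)\otimes A^{\otimes n}\bigr)_{\Sigma_n},
\]
is not correct: collapsing the boundary strata of $\CS{M}{S}$ only accounts for the \emph{surjective} part of the coend (multiplication of colliding labels), but the diagram (\ref{coend}) is over the \emph{unital} operad, so there are further identifications coming from the unit $1\in A$.  Without them the right-hand side is far too large---already for $V=k\cdot v$ one-dimensional, K\"unneth gives $(C_*(M^n)\otimes A^{\otimes n})_{\Sigma_n}\cong \mathrm{Sym}^n\bigl(C_*(M)\otimes k[v]\bigr)$, not $\mathrm{Sym}^n\bigl(C_*(M)\otimes V\bigr)$ as you assert in step~2.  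So the passage from your step~1 formula to $\mathrm{Sym}^n(C_*(M)\otimes V)$ is a non-sequitur, and the appeal to monoidality does not repair it.

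What is missing is precisely the mechanism the paper supplies: after collapsing the Fulton--MacPherson boundary one must still impose the unit relations, and these let one split a point carrying a monomial $v_{i_1}\cdots v_{i_r}$ into $r$ nearby points each carrying a single basis vector.  In the paper's language, the quotient complex becomes chains on the space of finite subsets of $M$ labeled by a basis $B$ of $V$, i.e.\ products of (signed or unsigned) symmetric powers $M^{i_b}/\Sigma_{i_b}$ over $b\in B$; taking homology in characteristic zero then yields $k[H_*(M)\otimes V]$.  If you insert this step your argument becomes correct and essentially identical to the paper's; your alternative via the Ran-space cosheaf would also work but is a different argument from the one actually given.
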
 

\begin{proof}
Choose a homogeneous basis of $V$
enumerated by a set $B$. The action of $\fm$ on a commutative algebra factorizes
through the augmentation map $\fm(\bullet)\to k$. 
It means, that the complex $\oplus A^{\otimes i}\otimes C_*(\CS{M}{\set{i}})$
modulo relations (\ref{coend}) 
equals to $\oplus A^{\otimes i}\otimes \overline{C}_*(\CS{M}{\set{i}})/ \sim$,
where $\sim$
are relations given by the unit and $\overline{C}_*(\CS{M}{\set{i}})$ is 
the chain complex of the Fulton--MacPherson compactification
with all border components shrunk to points. The latter space is 
simply the power $M^{\times i}$. 
Thus taking into account
relations $\sim$ we see that $\int_M A$ is the homology
of space of finite subsets of $M$ labeled by $B$,
that is the direct sum of homology of 
$M^{\times i_1}\times \cdots \times M^{\times i_{|B|}}$
modulo the action of product of symmetric groups
$\Sigma_{i_1}\times \cdots \times \Sigma_{i_{|B|}}$,
which is given by permutations for components that corresponds
to elements of the basis of even degree and by permutation
multiplied by the sign representation for odd degrees.
The multiplication on this space is obviously defined.
\end{proof}

\subsection{Disk operad}
\label{disk}

In this Subsection, we sketch a connection between 
our definition (which follows \cite{PS} and others) 
of the factorization homology and the one given in \cite{L, G, F}.

Given a $\fm$-algebra $A$ let us calculate its factorization homology 
 on the disk $\Disk=\{x\in\R|\lvert x\rvert<1\}$.

\begin{prop}
For a $\fm$-algebra $A$ the factorization  complex $\int_{\Disk} A$ is homotopy equivalent to 
$A$.
\label{ret}
\end{prop}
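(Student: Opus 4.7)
The plan is to exhibit a homotopy equivalence of right $\fm$-modules $C_*(\CS{\R}{\bullet}) \xrightarrow{\simeq} \fm$, where the target carries its tautological (self-action) right module structure, and then invoke the standard Yoneda/coend identity $A^\otimes \tens{\fm} \fm \simeq A$. Since $\Disk$ is canonically framed (being an open subset of $\R$), the simplified form of the factorization complex applies and I have
$$
\int_\Disk A \simeq A^\otimes \tens{\fm} C_*(\CS{\R}{\bullet}),
$$
so the whole question reduces to analyzing the right $\fm$-module $C_*(\CS{\R}{\bullet})$.

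The key geometric input is already present in Subsection \ref{FMC}: the extended dilatation $r\colon \mathbb{R}_{\geq 0} \times \CS{\R}{S} \to \CS{\R}{S}$ provides a strong deformation retraction of $\CS{\R}{S}$ onto the fiber $\pi^{-1}(\vec{0}) = \FM^S$. Passing to $\mathbb{R}$-chains, the evaluation $r_0 := r(0,-)$ produces a chain homotopy equivalence $C_*(\CS{\R}{S}) \xrightarrow{\simeq} \fm(S)$, naturally in $S \in \SetInj$.

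The crucial verification is that $r_0$ intertwines the right $\fm$-actions. Recall from Subsection \ref{FH} that $\CS{\R}{\bullet}$ is freely generated as a right $\FM$-module by $\Cs{\R}{\bullet}$, so a point of $\CS{\R}{S}$ is a finite configuration of distinct points of $\R$ with elements of $\FM$ plugged in at each point. The dilatation $r_0$ collapses the underlying open configuration to the origin, turning it into an element of $\FM$, and plugging in the remaining $\FM$-data at the collapsed points is then precisely operad composition. Hence $r_0$ is a morphism of right $\fm$-modules, with target the representable right module $\fm$.

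To conclude, I apply the Yoneda/coend identity: tensoring any left $\fm$-module with the representable right module $\fm$ returns its value on the singleton, yielding $A^\otimes \tens{\fm} \fm \simeq A^{\otimes \set{1}} = A$. The two delicate points I expect to watch are (i) the module-compatibility of $r_0$, which requires carefully unpacking the free generation and matching insertion of $\FM$-elements at nodes with operad composition, and (ii) the fact that a weak equivalence of right $\fm$-modules descends to one after tensoring with $A^\otimes$—this is the main obstacle and is handled either by noting that $C_*(\CS{\R}{\bullet})$ is a free (and hence flat) right $\fm$-module, or else by interpreting the tensor product in the derived/$\infty$-categorical sense where such statements are automatic.
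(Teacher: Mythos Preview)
Your argument is correct and rests on the same geometric core as the paper's proof: the dilatation family $r\colon\mathbb R_{\ge 0}\times\CS{\R}{S}\to\CS{\R}{S}$ from Subsection~\ref{FMC}, which retracts $\CS{\R}{S}$ onto $\FM^S$.  The paper simply unpacks your tensor-with-$A^\otimes$ step explicitly: it writes down the map $A\to\int_\Disk A$ as $a\mapsto[O]\otimes a$ and the map $\int_\Disk A\to A$ as the composite $C_*(\CS{\R}{S})\otimes A^{\otimes S}\to \fm(S)\otimes A^{\otimes S}\to A$, then invokes the same retraction to show these are mutually inverse up to homotopy.

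Your packaging as a weak equivalence of right $\fm$-modules followed by the coend identity $A^\otimes\tens{\fm}\fm\simeq A$ is a mild abstraction of this.  What it buys you is a clean separation of the geometric input from the algebra $A$, at the cost of the technical point you flag in~(ii).  The paper sidesteps that point entirely by working directly with the tensored complexes; in your framework it is handled because the retraction, its section (the inclusion $\FM^S\hookrightarrow\CS{\R}{S}$ as $\pi^{-1}(\vec 0)$), and the dilatation homotopy are all right-$\FM$-equivariant, so the equivalence survives tensoring over $\fm$ without any appeal to freeness or derived functors.
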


\begin{proof}
Define a morphism $A\to \int_D A$ as $a \mapsto [O]\otimes a$,
where $a\in A$ and $[O]$ is the $0$-cycle
presented by the origin of coordinates. To define the morphism in 
the opposite direction recall, that operations of the Fulton--MacPherson
operad are given by quotients $\CS{\R}{S}/\Dil$. Define morphism from the factorization complex 
$\int_D A$ to $A$ as the composite map
$$
C_*(\CS{\R}{S})\otimes A^{\otimes S}\to C_*(\CS{\R}{S}/\Dil)\otimes A^{\otimes 
S}=\fm(S)\otimes A^{\otimes S}\to A, 
$$
where the first arrow is given by the projection and the last arrow is the action
of operad. We have to show that  composition of this map with the previous
one is homotopic  to the identity map.
To build the homotopy  consider a retraction of the disk to the origin of coordinates.
Arguments as in the beginning of Subsection \ref{FMC} shows that it induces the homotopy we 
need.
\end{proof}

Embedding of disks into a bigger disk induces a map
from tensor powers of $\int_D A$ to $\int_D A$ itself
parametrized by the space of disks embedding.
This produces  an action on $\int_{\Disk} A$
of the nerve of disks operad  $\ND$ in the sense of \cite{L}, which is homotopy equivalent to $e_n$.
Moreover, the definition \cite[Definition 5.3.2.6]{L} of factorization homology
$\ND$-algebra being applied to $\int_{\Disk} A$ 
gives the same result as the definition we use for
factorization homology of $A$.

\subsection{Factorization homology and Lie algebra homology}
\label{fhlah}

Following the definition of a tree from the beginning
of Subsection \ref{li}, we say that a {\em bush} is
an oriented connected graph with three type of vertices:
{\em root} has no outgoing ones, {\em leaves}
have one outgoing edge and no incoming  ones and 
{\em internal vertexes} have one outgoing edge and more than one 
incoming ones.  
That is the only difference is that the root may have many incoming edges.
The composition of bushes is not defined,
but one may compose a tree and a bush by identification of 
an input of the bush and the output of the tree.
Thus, bushes form a right module over the operad of trees.
Denote by $\B_k(S)$ the set of bushes with $k$ 
edges not incident to leaves and leaves labeled by a set $S$.

Continuing on the same lines, define 
the operation of {\em edge splitting} in the same way as for trees:
we choose a vertex and a subset of incoming edges with more than one element,
then we cut off trees that grow from the chosen edges,
then glue an incoming edge to the vertex we choose and then glue
trees we cut to the input of the glued edge.
Note that an edge splitting for a bush may be done not only for
an internal vertex, but for a root as well.
But for an internal edge, the subset of edges must be
proper and for the root it may be the whole set.

For a bush $b$ denote by $\Det(b)$ the
one-dimensional $\mathbb Q$-vector space that is the determinant of
the vector space generated by internal edges. For $s>0$ consider the
complex
\begin{equation}
B(s)\colon \bigoplus_{b\in B_0(\set{s})} \Det (b) \to
\bigoplus_{b\in B_1(\set{s})} \Det(b) \to \bigoplus_{b\in
B_2(\set{s})} \Det (b) \to \cdots,
\label{bush-complex}
\end{equation}
where $\set{s}$ is the set of $s$ elements,
the cohomological degree of a bush $B\in B_k(\set{s})$ is $k-s$ and the  
differential is given by all possible splitting  of an edge. 
The composition of a tree and a bush
is compatible with differentials on complexes (\ref{complex}) and (\ref{bush-complex}) and thus
equips the complex with a structure of right
$\Li$-module.

Given 
a $\Li$-algebra $\g$ its homology (with trivial coefficients)
may be calculated by means of the homological Chevalley--Eilenberg
complex.
Its $n$-th term is the symmetric power $S^n(\g[1])$
and the differential is the coderivation defined
by the operations $l_i\colon S^i(\g[1])\to \g[1]$
corresponding to star trees (for the definition of the latter see Subsection \ref{li}).

This definition 
may be nicely formulated in terms of modules over operads as follows.

\begin{prop}
For a $\Li$-algebra $\g$ the product $\g^\otimes\otimes_{\Li} B(\bullet)$
 is isomorphic to the Chevalley--Eilenberg complex calculating homology
of $\g$ with trivial coefficients modulo the zero-degree component.  
\label{trivial}
\end{prop}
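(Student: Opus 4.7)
My plan is to exhibit a canonical isomorphism of complexes $\bar{C}(\g) \cong \g^\otimes \otimes_{\Li} B(\bullet)$, where $\bar{C}(\g) = \bigoplus_{m\ge 1} S^m(\g[1])$ denotes the Chevalley--Eilenberg complex with trivial coefficients modulo its ground-field component $S^0$. The backbone of the construction is a canonical decomposition of bushes that lets me push tree content across to the $\g$-side of the tensor product, leaving only bare bushes. Concretely, every bush $b$ with leaves labeled by $S$ has a unique presentation $b = b_m \circ (t_1,\ldots,t_m)$, where $b_m$ is the bare bush consisting of a root with $m$ directly attached leaves and no internal vertex, and each $t_i \in \Li$ is a tree whose leaves form a block $S_i$ of a partition $S = S_1 \sqcup \cdots \sqcup S_m$. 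The essential edges of $b$ are precisely the internal edges of the $t_i$ together with the root-incident edges sitting above each non-degenerate $t_i$, which gives a compatible factorisation of $\Det(b)$.

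Using this decomposition and the defining relations of $\otimes_{\Li}$, for any $(a_s)_{s\in S} \in \g^{\otimes S}$ one has
$$(a_s)_{s\in S} \otimes b \;\sim\; \bigl(t_1(\vec a_{S_1}),\ldots,t_m(\vec a_{S_m})\bigr)\otimes b_m,$$
so the coend collapses onto bare-bush representatives. Since $b_m$ has no essential edges, $\Det(b_m)$ is canonically trivial; the diagonal $\Sigma_m$-action (permuting the $m$ root-leaves of $b_m$ simultaneously with the tensor factors of $\g^{\otimes m}$), combined with the sign representation carried by the $\Li$-operad and transported through the right action, is exactly the symmetrisation that yields $S^m(\g[1])$ with the correct Koszul signs. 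Degrees match: $\deg b_m = -m$ and $\deg(a_1 \otimes \cdots \otimes a_m) = \sum |a_i|$ sum to $\sum(|a_i|-1)$, which is the degree of $a_1 \cdots a_m \in S^m(\g[1])$.

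For the differential there are two sources: the internal differential of $\g^\otimes$, which acts via $l_1$ on each tensor factor and reproduces the unary part of the CE coderivation, and the edge-splitting differential on $B$. Edge splittings occurring at internal vertices of the $t_i$ are absorbed into the coend identification via the right $\Li$-action and produce no new bare-bush terms. The remaining edge splittings occur at the root of $b_m$: for each $T \subseteq [m]$ with $k := |T| \ge 2$, the splitting produces $b_{m-k+1} \circ (\mathrm{star}_k, \id, \ldots, \id)$, which after pushing $\mathrm{star}_k$ across becomes $\bigl(l_k(\vec a_T),\vec a_{[m]\setminus T}\bigr) \otimes b_{m-k+1}$. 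Summed over $T$, this matches the $l_k$-part ($k \ge 2$) of the CE coderivation on $S^m(\g[1])$, so together with the $l_1$-part the total differential is the full CE differential. The main obstacle is sign bookkeeping: I would fix once and for all an orientation convention for the new essential edge produced by a root edge-splitting and check, term by term, that the $\Det$-twist on bushes, the $\sgn$-twist carried by $\Li$, and the Koszul signs from permuting and shifting the $\g[1]$-factors all conspire to yield the standard signs in the coderivation of $S(\g[1])$ extending $\sum_k l_k$.
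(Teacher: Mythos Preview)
Your proposal is correct and is exactly the straightforward verification the paper has in mind; the paper's own proof reads in its entirety ``The proof is straightforward. For a more conceptual treatment see \cite{Bal}.'' Your decomposition of an arbitrary bush as a bare bush $b_m$ with trees grafted at its inputs, followed by pushing the trees across the coend to land on representatives in $\g^{\otimes m}\otimes b_m \cong S^m(\g[1])$, together with the identification of root edge-splittings with the $l_k$-coderivations, is precisely the intended unwinding.
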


\begin{proof}
The proof is straightforward. For a more conceptual treatment see \cite{Bal}.
\end{proof}

The homology of a $\Li$-algebra with coefficients in the adjoint module
is calculated by the complex with $n$-th term  $S^n(\g[1])\otimes \g$.
The differential is a sum of the Chevalley--Eilenberg differential
and the coderivation $d_{ad}\colon \g\otimes S^n(\g[1])\to \bigoplus_i  S^i(\g[1])$ 
given by the adjoint action.
A light modification of the foregoing allows us 
to define it in terms of modules over operads.

A {\em marked bush} is a bush  with one of the edges
incoming to root marked.
Denote by $\B'_k(S)$ the set of marked bushes with $k$ non-marked
edges not incidental to leaves and leaves labeled by a set $S$. The edge splitting
for marked bushes is defined in the same way, if the root vertex
is chosen then the inserted edge is marked if the chosen subset
of edges contains the marked edge and is not marked otherwise.

As before, for a bush $b$ denote by $\Det(b)$ the
one-dimensional $\mathbb Q$-vector space that is the determinant of
the vector space generated by not marked edges. For $s>0$ consider the
complex
\begin{equation}
B'(s)\colon \bigoplus_{b\in B'_0(\set{s})} \Det (b) \to
\bigoplus_{b\in B'_1(\set{s})} \Det(b) \to \bigoplus_{b\in
B'_2(\set{s})} \Det (b) \to \cdots,
\label{bush'-complex}
\end{equation}
where $\set{s}$ is the set of $s$ elements,
the cohomological degree of a bush $B\in B_k(\set{s})$ is $k-s$ and the  
differential is given by all possible splitting  of an edge. 
The composition of a tree and a bush again 
equips the complex with a structure of right
$\Li$-module.

On the analogy of Proposition \ref{trivial} we have the following.

\begin{prop}
For a $\Li$-algebra $\g$ the product $\g^{\otimes}\otimes_{\Li} B'(\bullet)$
is isomorphic to the Chevalley--Eilenberg complex calculating homology
of $\g$ in the adjoint module.  
\end{prop}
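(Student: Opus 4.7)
The plan is to mimic the proof of Proposition~\ref{trivial}, with the marked edge playing the role of the adjoint coefficient factor.

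First I would identify the underlying graded vector space of $\g^{\otimes}\otimes_{\Li} B'(\bullet)$. The tensor product relation lets me absorb any subtree growing from a non-root internal vertex into an $\Li$-operation acting on the $\g$-labels at the leaves, so each class admits a ``corolla'' representative: a single root with $n+1$ edges going directly to leaves, exactly one of which is marked. Taking $\Sigma_n$-coinvariants on the $n$ unmarked edges with the Koszul signs dictated by $\Det(b)$ and by the grading convention $k-s$ yields $\bigoplus_n S^n(\g[1])\otimes \g$, where the last factor corresponds to the marked edge. This matches the underlying graded vector space of the Chevalley--Eilenberg complex with adjoint coefficients.

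Next I would match the differentials. Edge splittings on $B'(s)$ come in three flavors. Splittings at a non-root internal vertex are trivialized by the tensor product relation, since composing two $\Li$-operations inside a bush and then acting on $\g$ agrees with acting by the composed operation. Splittings at the root whose chosen subset does not contain the marked edge produce a newly inserted unmarked internal edge; after reduction this applies $l_k\colon S^k(\g[1])\to \g[1]$ to $k\ge 2$ entries of $S^n(\g[1])$ and returns the result to a single entry, which is exactly the Chevalley--Eilenberg coderivation on $S^\bullet(\g[1])$. Splittings at the root whose chosen subset contains the marked edge produce a newly inserted \emph{marked} internal edge; after reduction this applies $l_k$ to the adjoint $\g$-input together with $k-1$ entries of $S^n(\g[1])$ and places the result back in the marked slot, reproducing precisely the adjoint-action term $d_{\mathrm{ad}}$.

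Summing the two root-level contributions gives the full Chevalley--Eilenberg differential with adjoint coefficients. The main technical obstacle is the bookkeeping of Koszul signs: one has to verify that the determinant line $\Det(b)$, built only from unmarked edges, produces exactly the signs coming from the shift $\g[1]$ on the unmarked factors and from expressing the adjoint coderivation as a $\g$-equivariant extension of $l_k$. As for Proposition~\ref{trivial}, a more conceptual argument is available by identifying $B'$ with the bar resolution of the trivial $\Li$-algebra twisted by the adjoint module, along the lines of \cite{Bal}; this bypasses the explicit sign check.
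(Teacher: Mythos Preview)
Your proposal is correct and follows exactly the route the paper has in mind: the paper's own proof reads in full ``The proof is straightforward. For a more conceptual treatment see \cite{Bal}.'' You have simply unpacked that straightforward argument---reducing to corolla representatives, then matching the three flavors of root splitting to the two summands of the Chevalley--Eilenberg differential---and you also point to the same conceptual alternative via \cite{Bal}.
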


\begin{proof}
The proof is straightforward. For a more conceptual treatment see \cite{Bal}.
\end{proof}

In Subsection \ref{li} we have defined a morphism from operad $\Li$ to $\fm$.
Applying this morphism to the right $\fm$-module
$C_*(\fCS{M}{S})$ introduced in Subsection \ref{FH}
we get the right action of $\Li$ on $C_*(\fCS{M}{S})$.
A morphism from the right $\Li$-module given by complexes (\ref{bush-complex}) and 
(\ref{bush'-complex})
generated by bushes to this right $\Li$-module 
produces morphisms from Chevalley--Eilenberg complexes
to the factorization complex. It may be formulated as follows.

\begin{prop}
Let $A$ be an invariant $\fm$-algebra. Let 
$C_{Ch}=(S^*(\LL(A)[1]), d_{Ch})$ 
$C_{Ch}^{ad}=( S^*(\LL(A)[1])\otimes \LL(A), d_{Ch}+d_{ad})$
be the Chevalley--Eilenberg
complexes calculating the homology of $\Li$-algebra $\LL(A)$ with
trivial coefficients and
in the adjoint module correspondingly. Let $M$ be a closed manifold and $p\in M$
is a point. Then morphisms
\begin{eqnarray*}
a_1\otimes\cdots \otimes a_i&\mapsto& [\Cs{M}{\set{i}}]\otimes_{\Sigma_i} (a_1\otimes\cdots 
\otimes a_i) \\
a_1\otimes\cdots \otimes a_i\otimes a_0&\mapsto& [\Cs{M\setminus p}{\set{i}}] 
\otimes_{\Sigma_i} (a_1\otimes\cdots 
\otimes a_i)\otimes a_0
\end{eqnarray*}
define maps from complexes $C_{Ch}(\LL(A))$ and $C_{Ch}^{ad}(\LL(A))$ 
respectively to the factorization complex  $\int_M 
A$, where $[\Cs{M}{S}]$, $[\Cs{M\setminus p}{S}]$ and $[p]$ are cycles in
$C_*(\CS{M}{S})$ presented by the configuration space of distinct points,
distinct points different from $p$ and the point $p$.
\label{modules}
\end{prop}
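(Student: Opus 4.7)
The plan is to reinterpret both sides as coequalizers of right $\Li$-module morphisms and to construct the chain maps at that level. By Proposition \ref{trivial} and its adjoint analogue, the reduced Chevalley--Eilenberg complexes $C_{Ch}$ and $C_{Ch}^{ad}$ coincide with $A^{\otimes}\otimes_{\Li} B(\bullet)$ and $A^{\otimes}\otimes_{\Li} B'(\bullet)$ respectively, where the $\Li$-algebra structure on $\LL(A)$ is induced by the morphism $\Li[1-n]\to\fm$ of Proposition \ref{tree}. On the other hand the factorization complex $\int_M A=A^{\otimes}\otimes_{\ffm}C_*(\fCS{M}{\bullet})$ receives a canonical map from $A^{\otimes}\otimes_{\Li}C_*(\fCS{M}{\bullet})$ via restriction of scalars along $\Li\to\fm\to\ffm$ (using invariance of $A$). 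It therefore suffices to construct morphisms of right $\Li$-modules $\Psi\colon B(\bullet)\to C_*(\fCS{M}{\bullet})$ and $\Psi'\colon B'(\bullet)\to C_*(\fCS{M}{\bullet})$ which recover the formulas in the statement on the images of the generating star bushes.

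To define $\Psi$, I would decompose each bush uniquely as a star bush of arity $k$ with trees $t_1,\ldots,t_k$ grafted at its inputs, send the star bush of arity $k$ to the chain $[\Cs{M}{\set{k}}]$ canonically lifted to the framed compactification using the tangent bundle of $M$, and then extend to a general bush by inserting the chains $[\mu^{-1}(t_j)]\in\fm$ of Proposition \ref{tree} at the respective configuration points via the right $\fm$-action on $C_*(\fCS{M}{\bullet})$. The map $\Psi'$ is defined analogously: send the marked star bush of arity $k+1$ (one marked edge, $k$ unmarked) to $\{p\}\times[\Cs{M\setminus p}{\set{k}}]$ and extend by grafting trees via the right $\fm$-action at each of the $k+1$ points. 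On star bushes these maps reduce to the formulas in the statement, and compatibility with the right $\Li$-action is automatic from the construction.

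The nontrivial content is differential compatibility, which for each of $\Psi,\Psi'$ splits into two cases according to where an edge splitting occurs. A splitting at an internal vertex of some $t_j$ is pulled back from the $\Li[1-n]\to\fm$ morphism, and matches the boundary of the tree chain $[\mu^{-1}(t_j)]$ inserted at the $j$-th point, by Proposition \ref{tree}. A splitting at the root groups a subset of the root's incoming edges and inserts a new edge carrying their star tree above them; this corresponds to the collision boundary stratum of $[\Cs{M}{\set{k}}]$ in $\CS{M}{\set{k}}$ at which the selected configuration points coalesce. Since $M$ is closed, $[\Cs{M}{\set{k}}]$ has no other boundary, so the two differentials match. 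For $\Psi'$, the chain $[\Cs{M\setminus p}{\set{k}}]$ has an additional family of boundary components where points approach the removed point $p$; these correspond precisely to root splittings in a marked bush that group some unmarked edges together with the marked edge, and after tensoring with $A^{\otimes}$ they produce exactly the adjoint differential $d_{ad}$ of $C_{Ch}^{ad}$.

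The main obstacle is the sign and orientation bookkeeping: reconciling the determinant conventions of the bush complexes (\ref{bush-complex}) and (\ref{bush'-complex}) with the induced orientations on the collision strata of $\CS{M}{\set{k}}$ and $\CS{M\setminus p}{\set{k}}$, keeping track of the degree shift $[1-n]$ in Proposition \ref{tree}, and checking equivariance under the $\gl^k$-action on the framed compactification (which is where invariance of $A$ is used). Once these verifications are in place, $\Psi$ and $\Psi'$ are morphisms of right $\Li$-modules in complexes, and applying $A^{\otimes}\otimes_{\Li}(-)$ yields the desired chain maps $C_{Ch}(\LL(A))\to\int_M A$ and $C_{Ch}^{ad}(\LL(A))\to\int_M A$.
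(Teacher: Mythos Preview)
Your proposal is correct and follows essentially the same route as the paper: the paper's proof is a one-line reference to the discussion preceding the statement, which explains that the maps arise from morphisms of right $\Li$-modules $B(\bullet)\to C_*(\fCS{M}{\bullet})$ and $B'(\bullet)\to C_*(\fCS{M}{\bullet})$ after identifying the Chevalley--Eilenberg complexes with $A^{\otimes}\otimes_{\Li}B(\bullet)$ and $A^{\otimes}\otimes_{\Li}B'(\bullet)$. Your write-up simply makes explicit what the paper leaves implicit: the construction of $\Psi,\Psi'$ via star-bush decomposition and tree grafting, the matching of root splittings with collision strata of $[\Cs{M}{\set{k}}]$ and of the marked-edge splittings with the extra boundary near $p$, and the orientation and $\gl$-equivariance checks.
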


\begin{proof}
These morphisms are given by morphisms of right modules 
over $\fm$,
see the discussion before the Proposition.
\end{proof}

The first map above was introduced in  \cite{M} in a more explicit form.  

\section{Weyl $n$-algebra}
\label{Wna}

\subsection{Definition}
\label{v}

The usual Weyl algebra is a deformation of the polynomial
algebra. We have seen that a commutative algebra is  an algebra
over operad $\fm$  for any $n$. The analogous deformation of
a commutative algebra in the category of $\fm$-algebras 
gives us what we call the Weyl $n$-algebra.

Let $V$ be a $\mathbb{Z}$-graded  finite-dimensional vector space  over the base field $k$
of characteristic zero containing $\mathbb R$ equipped with a non-degenerate  skew-symmetric pairing 
$\omega\colon V\otimes V\to k$
of degree $1-n$. Let $k[V]$ be the polynomial algebra generated by $V$
and $k[[h]]$ be the ring of formal series and $k[V][[h]]$ is the polynomial algebra over it.
Denote by 
\begin{equation}
\partial_\omega \colon k[V]\otimes k[V] \to k[V]\otimes k[V]
\label{omega}
\end{equation}
the differential operator that is a derivation in each factor
and acts on generators as $\omega$. 

Consider $\FM(\set{2})$,  the space of 2-ary operations 
of the Fulton--MacPherson operad. This is  homeomorphic to the $(n-1)$-dimensional
sphere. Denote by $v$ the standard $\gl$-invariant 
$(n-1)$-differential form on it.  For any two-element subset $\{i,j\}\subset S$ 
denote by $p_{ij}\colon \FM(S)\to \FM(\set{2})$ the map that forgets
all points except ones marked by $i$ and by $j$. 
Denote by $v_{ij}$ the pullback of $v$ under projection $p_{ij}$.
Let $\alpha$ be an element of endomorphisms of
$k[V]^{\otimes S} \tens{Aut (S)}C^*(\FM(S))$ (where $C^*(-)$ is the de Rham complex)
given by 
$$
\alpha= \sum_{i,j\in S}\partial_\omega^{ij}\wedge v_{ij} ,
$$
where $\partial_\omega^{ij}$ is the operator $\partial_\omega$
applied to the $i$-th and $j$-th factors.
 
\begin{prop}
The composition
$$
k[V]^{\otimes S}\stackrel{\exp(h\alpha)}{\longrightarrow}k[V][[h]]^{\otimes S}\otimes C^*(\FM(S))\stackrel{\mu}{\to}k[V][[h]]\otimes C^*(\FM(S)),
$$
where $\mu$ is the product in the polynomial algebra,
defines a $k[[h]]$-algebra over the operad $\fm$ with the underlying space
$k[V][[h]]$.
\label{prod}
\end{prop}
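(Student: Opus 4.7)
The plan is to verify the three standard requirements for a dg-algebra over $\fm$: compatibility with differentials, compatibility with operadic composition, and symmetric-group equivariance. All three reduce, via the pairing between currents and de~Rham cochains, to statements about the endomorphism-valued cochain $F_S := \mu\circ\exp(h\alpha)$ on $\FM(S)$.

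First I would check well-definedness. The pairing $\omega$ has internal degree $1-n$, so the bi-derivation $\partial_\omega$ has degree $1-n$, while $v_{ij}$ is an $(n-1)$-form; hence each summand of $\alpha$ has total degree zero. The constant-coefficient operators $\partial_\omega^{ij}$ commute, and together with the corresponding form degrees the summands of $\alpha$ pairwise (super)commute, so $\exp(h\alpha)$ is a well-defined formal series. Moreover $\partial_\omega$ strictly lowers polynomial degree, so every $h^k$-coefficient acts as a bounded operator on any fixed input in $k[V]^{\otimes S}$.

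Next, compatibility with the differential. Each $v_{ij}$ is pulled back from the top form on $S^{n-1}\cong\FM(\set{2})$ and is therefore closed, so $\alpha$ is closed and so is $\exp(h\alpha)$; since $\mu$ carries no form degree, $F_S$ is a closed element of $\Hom(k[V][[h]]^{\otimes S},k[V][[h]])\otimes C^*(\FM(S))$. Stokes' theorem then shows that the resulting map on currents is a chain map. Symmetric-group equivariance is immediate because $\alpha$ is a sum over all ordered pairs (the swap $i\leftrightarrow j$ produces matching signs from the antipodal map on $S^{n-1}$ and from the skew-symmetry of $\omega$), and the unit $1\in k[V][[h]]$ matches the arity-zero operation of the underlying commutative structure.

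The main step is compatibility with operadic composition. For a surjection $p\colon S\to S'$ the operadic stratum $\FM(S')\times\prod_{k\in S'}\FM(p^{-1}k)\hookrightarrow\FM(S)$ must pull $F_S$ back to the appropriate product of $F_{S'}$ and $F_{p^{-1}k}$'s. The key geometric input is the restriction law for $v_{ij}$: if $p(i)\ne p(j)$ then the forgetful projection $p_{ij}$ factors on this stratum through the base $\FM(S')$, so $v_{ij}$ restricts to $v_{p(i)p(j)}$; if $p(i)=p(j)=k$ then $p_{ij}$ factors through $\FM(p^{-1}k)$ and $v_{ij}$ restricts to the corresponding form there. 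Consequently $\alpha_S$ restricts additively to $\alpha_{S'}+\sum_k\alpha_{p^{-1}k}$, whose summands act on disjoint tensor factors and therefore commute; exponentiating gives $\exp(h\alpha_S)\big|_{\text{stratum}}=\exp(h\alpha_{S'})\cdot\prod_k\exp(h\alpha_{p^{-1}k})$. Postcomposing with the associative factorization $\mu_S=\mu_{S'}\circ\bigotimes_k\mu_{p^{-1}k}$ exactly reproduces the operadic composition on the algebra side.

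The main obstacle is the geometric restriction law for $v_{ij}$, which rests on the explicit Fulton--MacPherson description recalled in Subsection~\ref{FMC}: a boundary stratum associated with a cluster $p^{-1}k$ records the rescaled infinitesimal configuration, so inter-cluster separations dominate intra-cluster ones and the forgetful projections split as described. Granted this, everything else is a formal manipulation of commuting closed operators, and the proposition follows.
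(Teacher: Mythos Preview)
Your proof is correct and carries out precisely the verification the paper omits (its entire proof reads ``This is a simple check''). One phrasing to sharpen: the summands in the restricted $\alpha_S$ do not act on disjoint tensor factors of $k[V]^{\otimes S}$ --- the inter-cluster operators $\partial_\omega^{ij}$ with $p(i)\neq p(j)$ hit the same factors as the intra-cluster ones --- but they still commute because all the $\partial_\omega^{ij}$ are constant-coefficient and each summand has total degree zero. The step that actually converts the inter-cluster part $\sum_{p(i)\ne p(j)}\partial_\omega^{ij}\wedge v_{p(i)p(j)}$ into $\alpha_{S'}$ \emph{after} the partial multiplication $\mu_p=\bigotimes_k\mu_{p^{-1}k}$ is the bi-derivation property of $\partial_\omega$ (it is defined as a derivation in each factor), giving
\[
\partial_\omega^{ab}\circ\mu_p=\mu_p\circ\sum_{i\in p^{-1}a,\ j\in p^{-1}b}\partial_\omega^{ij};
\]
you use this implicitly in the last sentence of the composition argument, but it is the algebraic point that makes the factorization go through and is worth stating explicitly.
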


\begin{proof}
This is a simple check. 
\end{proof}

The algebra defined in this way is obviously invariant under the action of $\gl$,
thus it is invariant (see Definition \ref{inv}).

\begin{definition}
For a pair $(V, \omega)$ as above the invariant $\fm$-algebra given
by Proposition \ref{prod} is called the {\em Weyl $\fm$-algebra}. Denote it by $\We_h(V)$.
\label{wna}
\end{definition}

Note that Proposition \ref{eq} provides us with the {\em Weyl $e_n$-algebra}.

One may give an alternative definition of the Weyl algebra as the  universal 
enveloping of the Heisenberg Lie algebra,
compare with \cite[3.8.1]{BD}. 
It allows us to define the rational version of the Weyl algebra, which is an  algebra 
over rational
chains of the Fulton--MacPheson operad.

\begin{example} For $n=1$ and a vector space of degree $0$
one gets the Moyal product.
\label{moyal}
\end{example}

Denote by $\We(V)$ the algebra over Laurent formal series, which is 
 the localization  $\We_h(V)\hat{\otimes}_{k[h]} k[h^{-1}, h]]$.
Both of algebras $\We_h(V)$ and $\We(V)$ are equipped
with increasing filtration, which is multiplicative
with respect to the commutative product on $k[V][[h]]$,  and $h$ and elements of $V$ lie in the component of degree 1.

Consider the $\Li$-algebra  $\LL(\We_h(V))$ associated with
the Weyl algebra . By the very definition,
all operations on it are given by integration of closed forms by chains of the Fulton--MacPherson
operad. But one may see, that chains representing higher operations
(that is operations, which are not composition of Lie brackets) in $\Li$
are all homologous to zero, because $\Li$ is a resolution of the Lie operad. 
Thus $\LL(\We_h(V))$ is a $\mathbb{Z}$-graded Lie algebra, all higher operations vanish.
This Lie algebra $\LL(\We_h(V))$ is a deformation of the Abelian one.
The first order deformation gives the {\em Poisson Lie algebra}: 
the underlying space is the $\mathbb{Z}$-graded commutative algebra
$k[V][[h]]$, the bracket is defined by $h\omega\colon V\otimes V\to k[[h]]$ 
on generators and
satisfies the Leibniz rule.
For the classical one-dimensional Weyl  algebra it is known, that 
higher terms of the deformation are non-trivial:
$\LL(\mathcal{W}^1_h(V))$
differs from the Poisson Lie algebra (\cite{V}).
But for $n>1$ the situation is  simpler.

\begin{prop}
For $n>1$
Lie algebra $\LL(\We)$ is isomorphic to the Poisson Lie algebra
of $(V\hat\otimes k[h^{-1},h]], \omega)$ over $k[h^{-1},h]]$, the definition of the latter is as 
above.
\label{poiss}
\end{prop}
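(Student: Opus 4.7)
The plan is to compute the $\Li$-operations on $\LL(\We_h(V))$ directly from Definition \ref{la} and Proposition \ref{tree}, show that for $n>1$ only the binary operation survives and agrees with the Poisson bracket, and then pass to the localization. Since $\We(V) = \We_h(V)\hat\otimes_{k[[h]]}k[h^{-1},h]]$, the statement for $\We$ will follow by inverting $h$.

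First I would identify the $k$-ary $\Li$-operation concretely. By Proposition \ref{tree} the $k$-ary star tree is sent in $\fm$ to the chain $[\Cs{\R}{\set{k}}/\Dil]$ of dimension $n(k-1)-1$, so this operation is
$$a_1\otimes\cdots\otimes a_k \;\mapsto\; \int_{\Cs{\R}{\set{k}}/\Dil} \mu\bigl(\exp(h\alpha)(a_1\otimes\cdots\otimes a_k)\bigr),$$
picking out the component of form degree $n(k-1)-1$, where $\alpha$ is the form-valued operator of Proposition \ref{prod}. Specializing to $k=2$, the chain is $\FM(\set{2}) = S^{n-1}$ and only the pair $\{1,2\}$ contributes, so $\alpha = \partial_\omega^{12}\cdot v_{12}$ with $v_{12}$ already a top-degree form on $S^{n-1}$. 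For $n>1$ one has $v_{12}\wedge v_{12}=0$ (by dimension when $n$ is odd, by graded commutativity when $n$ is even), hence $\exp(h\alpha)|_{S^{n-1}}=1+h\,\partial_\omega^{12}v_{12}$ exactly. Only the $h\alpha$ piece has the right form degree, so with the standard normalization $\int_{S^{n-1}}v=1$ one obtains $[a,b]=h\,\mu\,\partial_\omega(a\otimes b)$, and the Leibniz property of $\partial_\omega$ identifies this with the Poisson bracket on $k[V][[h]]$ determined by $h\omega$ on generators. The hypothesis $n>1$ enters precisely here: for $n=1$, $v$ is a $0$-form, higher powers of $\alpha$ survive, and the bracket becomes a Moyal-type commutator.

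For the higher arities $k\geq 3$ I would appeal to the argument sketched in the paragraph preceding the proposition: the star-tree chains representing the higher $\Li$-operations are null-homologous in $\fm$, since $\Li$ resolves the Lie operad and any cohomology class in arity $\geq 3$ is already a composition of binary brackets, so the closed form $\exp(h\alpha)$ integrates trivially against them. For $n$ even this also admits a transparent direct proof: the identity $\alpha\wedge\alpha=0$ holds on every $\FM(k)$ (the signs $v_{ij}\wedge v_{kl}=-v_{kl}\wedge v_{ij}$ cancel off-diagonal terms, while $v_{ij}\wedge v_{ij}=0$), so $\exp(h\alpha)=1+h\alpha$ everywhere and the $k\geq 3$ operations vanish by form-degree mismatch. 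Granting this, $\LL(\We_h(V))$ is literally the Poisson Lie algebra of $(V[[h]], h\omega)$, and localization in $h$ yields the isomorphism claimed for $\We(V)$. The main obstacle I anticipate is tightening the null-homology argument at the chain level for odd $n$, since the star-tree chains have non-trivial boundary in $\fm$; here I would supplement with the dimension constraint $l(n-1) = n(k-1) - 1$, which drastically restricts which powers of $\alpha$ can contribute, and then use Arnold relations among the $v_{ij}$ to handle the remaining integrals.
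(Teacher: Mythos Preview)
Your proof is correct and rests on the same observation as the paper's: for $n>1$ the $(n-1)$-form $v$ on the $(n-1)$-sphere $\FM(\set{2})$ satisfies $v\wedge v=0$, so $\exp(h\alpha)=1+h\alpha$ on $\FM(\set{2})$ and the binary bracket is exactly the Poisson bracket. The paper's proof is literally this one sentence; the vanishing of the higher $\Li$-operations, which you work through in detail, is dispatched separately in the paragraph \emph{preceding} the proposition via the same null-homology argument you invoke, so your additional analysis of the $k\ge 3$ case (including the direct $\alpha\wedge\alpha=0$ argument for even $n$ and the degree constraint $l(n-1)=n(k-1)-1$) goes beyond what the proposition's own proof contains and is more careful than the paper at that point.
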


\begin{proof}
Obvious, because for $n>1$ the square of the de Rham cochain $v$ is zero.
\end{proof}

\subsection{Factorization homology of $\We$}
\label{FHoW}

Weyl $n$-algebra is a deformation of a commutative algebra.
From Subsection \ref{comm} we know factorization homology
of a commutative algebra. Below we use deformation arguments  
to calculate factorization homology
of the Weyl algebra on a closed manifold $M$.

\begin{theorem}
Let $V$ be a $\mathbb{Z}$-graded finite-dimensional vector space with a skew-symmetric pairing of degree $1-n$ and
$V=\oplus_i V_i$ is its decomposition by degrees.
Let $M$ be a $n$-dimensional closed oriented manifold 
and $b_i$ its  Betti numbers.
Then
factorization homology $H_*(\int_M \We(V))$ is a one-dimensional 
$k[h^{-1}, h]]$-module of total degree
$$
\sum\limits_{\substack{\{i, j\}\\  i+j  \,\,\mbox{\tiny odd}}} (-i+j)b_i\dim V_j
$$
\label{one}
\end{theorem}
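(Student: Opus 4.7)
The plan is to realize $\We_h(V)$ as a deformation of the commutative polynomial algebra and run the associated spectral sequence on factorization homology. Equip $\We_h(V)$ with the multiplicative filtration in which $V$ and $h$ both have weight one. By the explicit formula in Proposition \ref{prod} the Moyal corrections $h^k\alpha^k$ strictly decrease this filtration, so the associated graded $\fm$-algebra is $k[V][[h]]$ with the trivial (commutative) structure. The induced filtration on $\int_M \We_h(V)$ has $E_0$-page $\int_M k[V][[h]]$, which by Proposition \ref{com} equals $k[H_*(M;k)\otimes V][[h]]$ as a graded vector space.

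The first non-trivial differential $d_1$ is induced by the leading Moyal term $h\alpha=h\sum_{i<j}\partial_\omega^{ij}\wedge v_{ij}$. On the associated graded only the contributions from pairs of colliding points survive, and integration of the invariant form $v$ over the $(n-1)$-sphere of collision directions yields a second-order operator $h\Delta$ on $k[H_*(M;k)\otimes V][[h]]$. Here $\Delta$ is the BV-type Laplacian for the graded symplectic form $\Omega$ on $H_*(M;k)\otimes V$ obtained by tensoring $\omega$ with the Poincar\'e duality pairing $H_i(M;k)\otimes H_{n-i}(M;k)\to k$. Non-degeneracy of both pairings implies that $\Omega$ is non-degenerate: the subspace $H_i(M;k)\otimes V_j$ pairs perfectly with $H_{n-i}(M;k)\otimes V_{1-n-j}$. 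After inverting $h$, the homology of $(k[H_*(M;k)\otimes V]((h)), h\Delta)$ is the BV/Poisson homology of the symmetric algebra on a non-degenerate graded symplectic vector space, which is one-dimensional and represented by any Lagrangian monomial. This handles the dimension assertion. An independent way to identify the generator is via Proposition \ref{modules}: using Proposition \ref{poiss} (for $n>1$), $\LL(\We)$ is the Poisson Lie algebra on $V\hat{\otimes}k[h^{-1},h]]$, and its Chevalley--Eilenberg complex on the symplectic vector space $H_*(M;k)\otimes V$ has the same one-dimensional homology.

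To get the degree formula, pair up the bidegree components $H_i(M;k)\otimes V_j$ and $H_{n-i}(M;k)\otimes V_{1-n-j}$ under $\Omega$. When the two bidegrees coincide the pairing restricts to a (skew-)form on a single graded piece; whether it contributes to the Lagrangian and with what sign is controlled by the parity of the total degree $-i+j\equiv i+j \pmod 2$, since $\Omega$ has odd total degree $1$. A bookkeeping of the degrees of a Lagrangian basis, splitting the contributions from each pair $\{(i,j),(n-i,1-n-j)\}$ and noting that only the odd-parity pieces contribute non-trivially, produces the asserted total degree $\sum_{i+j\ \text{odd}}(-i+j)\,b_i\dim V_j$.

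The main obstacles are: first, showing the spectral sequence collapses after $E_2$ on inverting $h$, i.e.\ that higher Moyal terms $h^k\alpha^k$ produce no surviving higher differentials --- this should follow from the fact that once $h\Delta$ has been shown to have one-dimensional cohomology, there is no room for further cancellation after passage to $k((h))$, but it requires a careful $h$-adic convergence argument; second, the degree count, which demands tracking the shift by $n$ from Poincar\'e duality, the shift by $1-n$ from $\omega$, and the sign conventions coming from the negative grading on $H_*(M)$, to pin down exactly the pairs with $i+j$ odd as the ones contributing.
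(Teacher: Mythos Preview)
Your approach is essentially the same as the paper's: filter so that the associated graded is the commutative polynomial algebra, invoke Proposition~\ref{com} for the $E_0$-page, identify the first differential as the second-order (Koszul/BV) operator coming from the non-degenerate pairing on $H_*(M)\otimes V$, and conclude one-dimensionality with degeneration for dimension reasons. The only cosmetic differences are that the paper uses the $h$-adic filtration rather than the multiplicative one and phrases the $E_1$-complex as a Koszul complex rather than a BV Laplacian; your ``obstacles'' are not actually obstacles---the collapse is immediate once $E_1$ is one-dimensional, and the degree is read off as the top exterior power of the odd part of $H_*(M)\otimes V$, exactly as you indicate.
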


\begin{proof}
Consider the filtration of $\int_M \We_h(V)$ 
by powers of $h$ and the corresponding  spectral sequence.
The associated graded complex calculates the factorization homology of the commutative
polynomial algebra $\We_{h=0}(V)$, the result is given by Proposition \ref{com}.
Let us calculate the 0-th differential.  Since the action of  $\fm$
at the first order by $h$
is given by a differential operator of order $2$, the $0$-th differential
is a differential operator of order $2$ as well.
Thus, it is enough to calculate the differential on the degree $2$
part of algebra $\We_{h=0}$. 

By Proposition \ref{com}, it is equal
to the homology of pairs of points of $M$ labeled by elements of a basis of $V$.
To get the differential of a given homology class one need 
to present it by a cycle, lift this cycle
to the complex, calculating $\int $ and take the differential there.
Present a given class $[c]\in H_*(M^2)$ by a cycle $c$ that intersects the diagonal
$\delta\colon M_\delta \hookrightarrow M^2$ transversally. Then one may see, that differential
of the lifted cycle is $c\cap M_\delta\cdot \omega(x,y)$, where $x$ and $y$
are element of the basis marking the points. In other words, it is equal to
$\delta_* \delta^* [c]\cdot\omega(x,y)$.

Thus, the $0$-th differential is given by the differential operator
of degree two given by the non-degenerate degree 1 pairing on $H_*(M)\otimes V$. 
The resulting complex is the Koszul complex, which has 
the only homology class and consequently the spectral sequence degenerates
at the first term. This only class
is presented by the top degree symmetric power
of the odd part of finite-dimensional vector space $H_*(M)\otimes V$, which gives the formula from the Theorem.
\end{proof}

\begin{example}
Let $n=1$ and $V$ is concentrated in degree $0$. 
Then by Example \ref{moyal}, $\We(V)$
is the usual Weyl algebra. For $M=S^1$ the factorization homology
is the Hochschild homology  and Theorem \ref{one} matches with
the well-known fact about Weyl algebra:
$$
\dim HH_i(\mathcal{W}^1(V))=\begin{cases}
1,& i=\dim V,\\
0, & \mbox{otherwise},
\end{cases} 
$$
see e.~g. \cite{FT}.
\end{example}

The proof of Theorem \ref{one} allows to produce  an explicit cycle
presenting the only non-trivial class in factorization homology
of the Weyl algebra on a closed manifold similarly to the
example. Below we consider the simplest case, leaving the general one to the reader.
 
\begin{prop}
Let $M$ be an odd-dimensional rational homology sphere and the $\mathbb{Z}$-graded vector
space $V$ has only odd-degree components. Then the only non-trivial
cycle in the homology of $\int_M \We(V)$ is presented by a cycle in $C_0(M)$
given by a point marked by an element $S^{\mathrm{top}}V$ of the top
degree in the symmetric power of $V$, since $V$ lies in the odd degree the latter makes sense.
\label{cycle}
\end{prop}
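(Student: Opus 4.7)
The plan is to trace the unique homology class produced by Theorem \ref{one} backwards through the $h$-adic spectral sequence used in its proof, and verify that the chain in the statement is a lift of it. Filtering $\int_M \We(V)$ by powers of $h$, the associated graded is $\int_M k[V]$, which by Proposition \ref{com} equals $k[H_*(M) \otimes V]$ with $H_*(M)$ negatively graded. Because $M$ is an odd-dimensional rational homology sphere, $H_*(M) = H_0(M) \oplus H_{-n}(M)$ with $n$ odd, so $H_{-n}(M)$ sits in odd degree and $H_0(M)$ in even degree. Combined with the hypothesis that $V$ lies in odd degrees, the odd part of $H_*(M) \otimes V$ is exactly $H_0(M) \otimes V \cong V$, while $H_{-n}(M) \otimes V$ is even. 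The first non-trivial differential of the spectral sequence is the Koszul differential for the pairing induced by $\omega$, and the unique surviving Koszul class is the top symmetric power of the odd part, namely $S^{\dim V}(H_0(M) \otimes V) \cong \Lambda^{\dim V} V$.

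Next, I interpret this class at the chain level. A generator of $H_0(M)$ is represented by any point $p \in M$, and the top element of the symmetric algebra on the odd vector space $V$ is $v_1 \cdots v_{\dim V}$ for a basis $\{v_i\}$ of $V$. Hence the Koszul class is represented by the chain $[p] \otimes v_1 \cdots v_{\dim V}$ in the summand $C_0(M) \otimes_{\Sigma_1} \We(V)$ of the complex (\ref{coend}), which is precisely the chain in the statement. This chain is closed in the full factorization complex because $[p]$ is a $0$-cycle in $C_*(\CS{M}{\set{1}})$, the element $v_1 \cdots v_{\dim V}$ has zero internal differential in $\We(V)$, and there are no non-trivial operadic identifications acting on a single point. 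It is non-trivial because it sits in the lowest $h$-adic filtration level, its image in the associated graded is the Koszul cocycle just described, and by the degeneration established in Theorem \ref{one} this cocycle survives to represent the unique non-zero class in $H_*(\int_M \We(V))$.

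The only real obstacle in this plan is the parity bookkeeping: the oddness of $n$ together with the oddness of $V$ is what forces the odd part of $H_*(M) \otimes V$ to coincide with $H_0(M) \otimes V$, so that the top Koszul class is concentrated on a single point of $M$ rather than on a configuration of two or more points. Once this is checked, everything else follows by direct tracing through the identifications of Theorem \ref{one} and Proposition \ref{com}. A further sanity check is that the total degree $\sum_j j\,\dim V_j$ of $\Lambda^{\dim V}V$ matches the degree predicted by Theorem \ref{one} when one substitutes $b_0 = b_n = 1$, since the pair $(i,j) = (n,j)$ has even parity $n+j$ and drops out.
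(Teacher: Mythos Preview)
Your proof is correct and follows exactly the route taken in the paper: identify the chain as a cycle, check that it represents a non-zero class on the first page of the $h$-adic spectral sequence from Theorem~\ref{one}, and invoke degeneration. The paper's own proof is two sentences; you have simply unpacked the parity bookkeeping and the identification of the Koszul class that the paper leaves implicit.
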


\begin{proof}
This is obviously a cycle and it presents a non-trivial class at the first page
of the spectral sequence from the proof of Theorem \ref{one}.
Since the spectral sequence degenerates at the first page, this cycle survives.
\end{proof}

\subsection{Euler structures}
\label{euler}
As it was mentioned after Proposition \ref{colimit}, framing on a manifold 
simplifies the definition of the factorization complex.
For Weyl $n$-algebra a weaker 
structure is sufficient. 

For a manifold $M$  and a map of
finite sets $S'\to S$ denote by $\CS{M}{S'\to S}$
the fiber product
\begin{equation}
\begin{CD}
  @.  \CS{M}{S'}\\
  @. @VVV      \\
\Cs{M}{S} @>>> {M}^{S'} 
\end{CD}
\end{equation}
where the horizontal map is composition
of the  embedding $\Cs{M}{S}\hookrightarrow M^S$
and the map $M^S\to M^{S'}$
 induced by the map $S'\to S$,
and the vertical map is the projection. 
Space $\CS{M}{S'\to S}$
is equipped with the projection
$$
\pi \colon \CS{M}{S'\to S} \to\Cs{M}{S} .
$$

For the only map from $\set{2}$ to $\set{1}$ the space $\CS{M}{\set{2}\to \set{1}}$
is the total space of the sphere bundle associated with the tangent 
bundle.

\begin{definition}
An {\em Euler structure} on a $n$-manifold $M$
is a closed differential form $\ev$ on $\CS{M}{\set{2}\to \set{1}}$
such that its restriction on any fiber
of the  projection $\CS{M}{\set{2}\to \set{1}}\to M$ is 
the standard volume form on the sphere.
\end{definition}

The only obstruction to the existence of the Euler structure is the
rational Euler class. In particular, on odd-dimensional manifolds
an Euler structure always exists.

Fix an  Euler structure on $M$ given by a form $\ev$ on  $\CS{M}{\set{2}\to \set{1}}$.
For any morphism of arrows from $\set{2}\to \set{1}$ to $S'\to S$
the natural map 
$$
\CS{M}{S'\to S} \to\CS{M}{\set{2}\to \set{1}}
$$
is defined. Denote by $\ev_{ij}$ the pull back of $\ev$ under this map.

Let $V$ be a $\mathbb{Z}$-graded finite-dimensional vector space  equipped with a non-degenerate skew-symmetric pairing 
$\omega\colon V\otimes V\to k$
of degree $1-n$. Let $k[V]$ be the polynomial algebra generated by $V$.
As before let $\ealpha$ be an element of endomorphisms of
$k[V]^{\otimes S} \tens{Aut (S')} C^*(\CS{M}{S'\to S})$ 
given by 
$$
\ealpha= \sum_{\{i,j\}}\partial_\omega^{ij}\wedge\ev_{ij} ,
$$    
where the sum is taken by all 
morphisms of arrows from $\set{2}\to \set{1}$ to $S'\to S$ and
$\partial_\omega^{ij}$ is the operator $\partial_\omega$
applied to the $i$-th and $j$-th factors, where $\partial_\omega$ is defined by (\ref{omega}).
The exponent of $h\ealpha$ in composition with the cup product gives
endomorphism of 
$k[V][[h]]^{\otimes S'} \tens{Aut (S')} C_*(\CS{M}{S'\to S})$.
Consider the composite map 
\begin{equation}
\begin{CD}
k[V][[h]]^{\otimes S'}\tens{Aut (S)}C_*(\CS{M}{S'\to S})\\
@V{\exp(h\ealpha)}VV\\
k[V][[h]]^{\otimes S'}\tens{Aut (S')} C_*(\CS{M}{S'\to S})\\
@V{\mu\otimes {\pi}_*}VV\\
k[V][[h]]^{\otimes S}\tens{Aut (S)} C_*(\Cs{M}{S}),
\end{CD}
\label{composite}
\end{equation}
where $\mu$ is action of morphism in the category $Comm^\otimes$.

\begin{prop} 
\label{EU}
Let $V$ be a $\mathbb{Z}$-graded finite-dimensional vector space   equipped with a non-degenerate 
 skew-symmetric pairing 
$\omega\colon V\otimes V\to k$
of degree $1-n$, $A=\We_h(V)$ be the corresponding Weyl algebra
and $M$ be a closed manifold with an Euler structure.
Then the factorization complex $\int_M \We_h(V)$ 
is the colimit of the diagram
\begin{equation}
\begin{CD}
\bigoplus\limits_{i\colon S'\to S}(C_*(\CS{M}{S'\to S}))\tens{Aut(S')} A^{\otimes S'} \\
@AAA \\
\bigoplus\limits_{S'} C_*(\CS{M}{S'})\tens{Aut(S')} A^{\otimes S'}\\
@VVV \\
\bigoplus\limits_{S}C_*(\Cs{M}{S})\tens{Aut(S)} A^{\otimes S} 
\end{CD}
\label{eu}
\end{equation}
where the downwards arrow is the composite map (\ref{composite})
and the upwards arrow is induced by the natural embedding.
\end{prop}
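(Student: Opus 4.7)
The plan is to compare the proposed colimit (\ref{eu}) with the general formula (\ref{coend}) from Proposition \ref{colimit}, specialized to $A=\We_h(V)$, exploiting the very rigid form of the $\fm$-action on the Weyl algebra together with the defining property of an Euler structure. The starting observation is that, by Proposition \ref{prod}, every operation $\beta\in\fm(S)$ acts on $\We_h(V)$ by pairing $\beta$ with the closed form $\exp(h\alpha)$, where $\alpha=\sum_{i,j}\partial_\omega^{ij}\wedge v_{ij}$ involves only pullbacks of the single $\gl$-invariant volume form $v$ on $\FM(\set{2})=S^{n-1}$. An Euler structure $\ev$ is precisely a globalization of $v$ to the tangent sphere bundle $\CS{M}{\set{2}\to\set{1}}$, so the pullbacks $\ev_{ij}$ on $\CS{M}{S'\to S}$ serve as ``un-framed'' analogues of the $v_{ij}$, providing a global expression for the Weyl-algebra operad action that does not require a trivialization of $TM$.

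First I would unpack the fiber-bundle structure of $\pi\colon\CS{M}{S'\to S}\to\Cs{M}{S}$: over a configuration $(x_s)_{s\in S}$ the fiber is canonically the product $\prod_{s\in S}\FM^{i^{-1}(s)}$ of Fulton--MacPherson operation spaces, the identification being determined up to the action of $\gl$ by choices of tangent frames at the $x_s$. This yields a natural isomorphism
$$
C_*(\CS{M}{S'\to S}) \;\cong\; C_*(f\Cs{M}{S})\tens{\gl^S}\bigotimes_{s\in S}\fm(i^{-1}s),
$$
which, after tensoring with $A^{\otimes S'}$ over $Aut(S')$, identifies the top term of (\ref{eu}) with the middle term of (\ref{coend}).

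Second, under this identification I would check that the composite (\ref{composite}) recovers the downward arrow of (\ref{coend}) for $A=\We_h(V)$: $\exp(h\ealpha)$ enacts the $\fm$-action by pairing against the $\ev_{ij}$, which fiberwise and in any local tangent frame restrict to the $v_{ij}$ by the defining property of an Euler structure; the map $\mu$ then collapses polynomial factors along the surjection $i\colon S'\to S$, and integration along the sphere fibers $\pi_*$ lands in $C_*(\Cs{M}{S})\otimes A^{\otimes S}$. The upward arrow, induced by the natural inclusion of $\CS{M}{S'\to S}$ as a compactification stratum of $\CS{M}{S'}$, corresponds under the isomorphism above to the right $\fm$-action on framed Fulton--MacPherson chains that gives the upward arrow of (\ref{coend}). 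Hence the two colimit diagrams are isomorphic and compute the same factorization complex.

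The main technical hurdle is making the identification $C_*(\CS{M}{S'\to S})\cong C_*(f\Cs{M}{S})\tens{\gl^S}\bigotimes_s\fm(i^{-1}s)$ fully precise with its $\gl^S\rtimes Aut(S)$-equivariance, and verifying that the Euler-form pairing genuinely reproduces the framing-dependent operadic pairing on chains after taking coinvariants. This reduces to the pointwise statement that integrating any $\gl$-invariant $(n-1)$-form over the standard sphere agrees with the corresponding integral of $\ev$ over any tangent sphere of $M$, which is immediate from the defining property of the Euler structure.
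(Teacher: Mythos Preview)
Your proposal is correct and follows the same approach as the paper. The paper's own proof is a two-line sketch---``the statement is local along $\Cs{M}{S}$; as the Weyl algebra is invariant, locally it directly follows from Proposition~\ref{colimit} and the definition of the Weyl algebra''---and your argument is precisely the unpacking of that sketch: you identify the top term of (\ref{eu}) with the middle term of (\ref{coend}) via the fiber-bundle structure of $\pi\colon\CS{M}{S'\to S}\to\Cs{M}{S}$, and then verify that the Euler form $\ev$ fiberwise restricts to the standard $v$ so that the downward arrow (\ref{composite}) reproduces the $\fm$-action on $\We_h(V)$ and hence the downward arrow of (\ref{coend}).
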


\begin{proof}
The statement is local along $\Cs{M}{S}$. As the Weyl algebra is invariant (see the remark
before Definition \ref{wna}),  locally it
directly follows from Proposition \ref{colimit} and the definition
of the Weyl algebra.
\end{proof}

\section{Perturbative invariants}

\subsection{Propagator}

Let $M$ be a rational homological sphere of dimension $n$.
Let us denote by  $\tilde M$ the complement in $M$ to the interior of a 
little ball around a point $p\in M$. 

Below we will need the Fulton--MacPherson compactification
of manifolds with boundary. Let $X$ be such a manifold
and  $X\hookrightarrow X'$ be its closed embedding in a manifold of the same
dimension, for example, $X'$ is obtained from $X$ by gluing a collar.
Then  denote by $\CS{\tilde{X}}{S}$ the fiber product
$$
\begin{CD}
  @.  \CS{X'}{S}\\
  @. @VVV      \\
\tilde{X}^S @>>> {X'}^S 
\end{CD}
$$
where the upwards arrow is the embedding and the vertical one is the projection.

Consider the differential $(n-1)$-form
on $\Cs{\R}{\set{2}}$ which is the pullback of the standard form on the sphere
under  the map $(x,y) \mapsto (x-y)/|x-y|$ and continue it on
$\CS{\R}{\set{2}}$ straightforwardly (in Subsection \ref{v} it was denoted by $v$).
Consider the subset of $\CS{\R}{\set{2}}$
where both points lie on the unit
sphere and restrict the form as above to it.
Call the result the {\em standard form}.

The following proposition stays, that on the  2-point Fulton--MacPherson 
configuration space of the ``fake disk'' $\tilde M$ there is a differential $(n-1)$-form
similar to the standard form on the configuration space of the  real disk.

\begin{prop}
For a rational homological sphere $M$ 
choose a point $O$ in the interior of its complement $\tilde{M}$ to
a little disk. Then
on manifold with corners $\CS{\tilde{M}}{\set{2}}$
as above there exists a  differential $(n-1)$-form 
such that
\begin{enumerate}
\item it is smooth and closed;
\item\label{2} its restriction to any fiber of $\pi \colon\CS{\tilde{M}}{\set{2}} \to 
\tilde{M}^2 $
over any point on the diagonal, which is a sphere, 
is equal to 
the standard form on the sphere;
\item\label{3} its restriction to the subset where both points of the configuration
 lie on the boundary equals to the standard form; 
\item\label{4} its restriction to $O\times \partial\tilde{M}$ and $\partial\tilde{M}\times O$ 
equals to the 
standard form on the sphere. 
\end{enumerate}
\label{propagator}
\end{prop}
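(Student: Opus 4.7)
The strategy is to first specify the form on the boundary strata of $\CS{\tilde{M}}{\set{2}}$ in a coherent way, then extend inward, and finally correct the extension so that it is closed. The rational homology sphere hypothesis will be needed precisely to kill the obstruction to the last step.

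First, I would write down the form on the boundary. The boundary of $\CS{\tilde{M}}{\set{2}}$ decomposes into a collision stratum (the unit sphere bundle of $T\tilde{M}$ over the diagonal), the ``one point on $\partial\tilde{M}$'' strata, and the ``both points on $\partial\tilde{M}$'' stratum, meeting at corners. Condition (2) fixes the form on the collision stratum as the fiberwise standard sphere volume, condition (3) fixes it on the $\partial\tilde{M}\times\partial\tilde{M}$ stratum as the standard form (using the collar identification near $\partial \tilde{M}$), and condition (4) fixes it on $\{O\}\times\partial\tilde{M}$ and $\partial\tilde{M}\times\{O\}$ as the pullback of the standard sphere form under the Gauss map based at $O$. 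Using a partition of unity and smooth interpolation in a collar neighborhood of the boundary, I would glue these prescriptions into a single smooth $(n-1)$-form $\eta_0$ defined in a neighborhood of $\partial\CS{\tilde{M}}{\set{2}}$, closed on the boundary itself, and verify compatibility at corners by direct local computation.

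Second, I would extend $\eta_0$ arbitrarily smoothly to all of $\CS{\tilde{M}}{\set{2}}$. Then $d\eta_0$ is a closed $n$-form vanishing on the boundary, so it represents a class in $H^{n}(\CS{\tilde{M}}{\set{2}},\partial\CS{\tilde{M}}{\set{2}};\mathbb{R})$. This class is the sole obstruction to correcting $\eta_0$ by a form vanishing on the boundary to produce a closed form extending the boundary data. To show the class vanishes, I would compute the rational cohomology of $\CS{\tilde{M}}{\set{2}}$ via the long exact sequence of the pair $(\tilde{M}^{2},\tilde{M}^{2}\setminus\Delta)$ together with Mayer--Vietoris, using that $\tilde{M}$ has the rational cohomology of a disk when $M$ is a rational homology sphere. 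The relevant relative group reduces to a one-dimensional space detected by integration over a single fundamental cycle (essentially a fiber of the collision sphere bundle, or equivalently the small linking sphere around $O$), and the pairing of $d\eta_0$ against it reduces by Stokes' theorem to a boundary integral of $\eta_0$, which evaluates to zero because of the unit normalization built into conditions (2)--(4) (the incoming and outgoing linking number contributions cancel).

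Third, once the obstruction vanishes, pick $\beta$ on $\CS{\tilde{M}}{\set{2}}$ with $d\beta = d\eta_0$ and $\beta$ vanishing to all orders on $\partial\CS{\tilde{M}}{\set{2}}$; then $\eta := \eta_0 - \beta$ is smooth, closed, and satisfies all four conditions by construction.

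The main obstacle I expect is the cohomological step: one must identify the relevant relative cohomology group precisely and show that the Stokes-theorem evaluation of $d\eta_0$ really does vanish on a generating cycle, and this is where the rational homology sphere assumption is crucial (for a general manifold one would pick up nontrivial intersection terms reflecting the Euler class and higher cohomology of $M$). A secondary but still delicate point is checking in step one that the three locally prescribed models (sphere volume at collisions, Gauss map form near $\partial\tilde{M}$, Gauss map form based at $O$) agree on the codimension-two corner strata so that $\eta_0$ is genuinely \emph{closed} on $\partial\CS{\tilde{M}}{\set{2}}$; tracking orientations and the normalizing constant of the standard sphere form is the most fiddly bookkeeping.
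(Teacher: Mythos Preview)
Your proposal is correct and is precisely the standard obstruction-theoretic construction that the paper is pointing to: the paper's own proof consists entirely of the sentence ``It follows from elementary considerations with Mayer--Vietoris sequence, see for example \cite{AS},'' and what you have sketched is exactly that Axelrod--Singer argument (prescribe on boundary strata, extend, identify the obstruction in relative cohomology, kill it using that $\tilde{M}$ is a rational homology disk). Your write-up is in fact considerably more detailed than what the paper supplies; the only places to be careful when filling it in are the ones you already flagged, namely the corner compatibility of the boundary prescriptions and the precise bookkeeping in the Stokes evaluation.
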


\begin{proof}
It follows from elementary considerations with Mayer--Vietoris sequence,
see for example \cite{AS}.
\end{proof}

\begin{definition}
We call the  $(n-1)$-form as above on $\CS{\tilde M}{\set{2}}$ a {\em propagator}
and denote it by $\nu$.
\label{Pro}
\end{definition}

Note, that our definition of propagator differs slightly from the one given
in \cite{AS}, \cite{BC}.

\subsection{Collapse}

Let $M$ and
$M'$ be any closed $n$-manifolds.
Choose a point in each manifold and cut off  small open balls
around them. We get two manifolds $\tilde M$ and $\tilde M'$ with boundaries
$S^{n-1}$. Denote their interiors by $M_0$ and $M'_0$.
The connected sum $M\# M'$ is a result of gluing together of
these two manifolds by their boundaries. 
Call the continuous map $\col \colon M\# M'\to M'$ that shrinks $M$
to a point $p \in M'$ by the {\em collapse map}.

In general, the collapse map does not produce any map
between factorization homologies of $M\# M'$ and $M$.
There are two cases when it obviously does.

The first case is when the algebra is commutative.
The factorization homology is given by homology of the
powers of the space and the morphism is given by the
direct image on homology of the powers. 

The second case is when $M=S^n$. Then $M\# M'= M'$.
To build the morphism one need loosely speaking
 to take
everything sitting in $M$, multiply it and put the result to the point
$p$. Arguments as in Proposition \ref{ret} shows that 
this is an isomorphism.

There is  another case when such morphism exists:
when $M$ is an odd-dimensional homology sphere and the algebra at hand
 is the Weyl algebra.
Its construction occupies the rest of this Subsection.

The morphism factorizes through an intermediate object we are going to define.

Let $M$ be a rational homology  odd-dimensional sphere and
$M'$ be any closed $n$-manifold of the same dimension.
Choose Euler structures on $M$ and $M'$, this is possible
because they are odd-dimensional. 
These Euler structures naturally define an Euler structure on the connected sum $M\# M'$ 
due to the following trick, which works for any pair of
odd-dimensional manifolds.
Choose as above small embedded open balls $D\hookrightarrow M$
and   $D\hookrightarrow M'$  and suppose, that the sphere bundle associated with the tangent
bundle is trivialized over $D$ and  the Euler structure is constant there.
To build the connected sum $M\# M'$ one need to glue the complements of
$D$ in $M$ and $M'$ by some orientation-reversing linear automorphism
of the sphere $S=\partial D$. Let us choose the antipodal map.
One may see, that under the natural isomorphism over $S$
of sphere bundles associated with  tangent bundles over $M$ and $M'$,
the Euler structures on $M$ and $M'$  fit together.

For a surjective morphism of manifolds $f\colon X'\to X$
and a map of sets $S'\to S$
define space $\CS{X'/X}{S'\to S}$ as the fiber product
\begin{equation}
\begin{CD}
  @.  \CS{X'}{S'}\\
  @. @VVV      \\
\Cs{X}{S} @>>> {X}^{S'} 
\end{CD}
\end{equation}
where the vertical arrow is the composition
of projection $\CS{X'}{S'} \to {X'}^{S'}$ with $f^{S'}$
and the lower arrow is  composition
of the  embedding $\Cs{M}{S}\hookrightarrow X^S$
and the map $X^S\to X^{S'}$
induced by the map $S'\to S$.
Space $\CS{X'/X}{S'\to S}$
is equipped with the projection
$$
\pi \colon \CS{X'/X}{S'\to S} \to\Cs{X}{S} .
$$

For the collapse map $M\# M'\to M'$ consider space $\CS{M\# M'/M'}{\set{2}\to \set{1}}$.
This space contains $\CS{M'}{\set{2}\to \set{1}}$ and $M_0^2$
as subspaces. On the first one the Euler structure gives a differential 
$(n-1)$-form and on the second one choose a propagator (Definition \ref{Pro}).
Property \ref{3} of  propagator (Proposition \ref{propagator}) allows to glue
it in a global $(n-1)$-cocycle in the cochain complex of  $\CS{M\# M'/M'}{\set{2}\to \set{1}}$.
Denote it by  $\EV$. Note that the space is
not manifold, but $\EV$
is a well-defined cochain of the corresponding relative complex.

Let $V$ be a $\mathbb{Z}$-graded finite-dimensional vector space  equipped with a non-degenerate skew-symmetric pairing 
$\omega\colon V\otimes V\to k$
of degree $1-n$. Let $k[V]$ be the polynomial algebra generated by $V$.
Mimicking construction from Subsection \ref{euler}
let $\Ealpha$ be an element of endomorphisms of
$k[V]^{\otimes S} \tens{Aut (S')} C^*(\CS{M\# M'/M'}{S'\to S})$ 
given by 
$$
\ealpha= \sum_{\{i,j\}}\partial_\omega^{ij}\wedge\EV_{ij} ,
$$    
where the sum is taken by all 
morphisms of arrows from $\set{2}\to \set{1}$ to $S'\to S$ and
$\partial_\omega^{ij}$ is the operator $\partial_\omega$
applied to the $i$-th and $j$-th factors, where $\partial_\omega$ is defined by (\ref{omega}).
The exponent of $h\Ealpha$ in composition with  cup product gives
endomorphism of 
$k[V][[h]]^{\otimes S'} \tens{Aut (S')} C_*(\CS{M\# M'/M'}{S'\to S})$.
Consider the composite map 
\begin{equation}
\begin{CD}
k[V][[h]]^{\otimes S'}\tens{Aut (S)}C_*(\CS{M\# M'/M'}{S'\to S})\\
@V{\exp(h\Ealpha)}VV\\
k[V][[h]]^{\otimes S'}\tens{Aut (S')} C_*(\CS{M\# M'/M'}{S'\to S})\\
@V{\mu\otimes {\pi}_*}VV\\
k[V][[h]]^{\otimes S}\tens{Aut (S)} C_*(\Cs{M'}{S}),
\end{CD}
\label{composite2}
\end{equation}
where $\mu$ is the morphism in the category $Comm^\otimes$.

By analogy with  (\ref{eu}) consider
the diagram
\begin{equation}
\begin{CD}
\bigoplus\limits_{S'} C_*(\CS{M\#M'}{S'})\tens{Aut(S')} A^{\otimes S'}\\
@AAA\\
\bigoplus\limits_{i\colon S'\to S}(C_*(\CS{M\# M'/M'}{S'\to S}))\tens{Aut(S')} A^{\otimes S'}\\
@VVV \\
\bigoplus\limits_{S}C_*(\Cs{M'}{S})\tens{Aut(S)} A^{\otimes S} 
\end{CD}
\label{collapse}
\end{equation} 
where the downwards arrow is the composite map (\ref{composite2})
and the upwards arrow is induced by the natural embedding.

The desired intermediate object is the colimit of diagram (\ref{collapse}). 
Property \ref{2} of propagator (Proposition \ref{propagator}) 
supplies us with  a natural  map  from the diagram presenting 
$\int_{M\# M'} \We_h(V)$ by Proposition \ref{EU} to (\ref{collapse}),
thus with a map from 
$\int_{M\# M'} \We_h(V)$  to the colimit of (\ref{collapse}).  

The following Proposition completes the construction.

\begin{theorem}
The colimit of (\ref{collapse}) is isomorphic to $\int_{M'} \We_h(V)$.
\label{final}
\end{theorem}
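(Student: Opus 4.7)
The plan is to realize the claimed isomorphism as the passage to colimits of an explicit morphism of spans, and to invert that morphism using the one-dimensionality of $\int_{M}\We_h(V)$ provided by Theorem \ref{one}. Both diagrams (\ref{eu}) for $M'$ and (\ref{collapse}) are pushouts that share the same bottom term $\bigoplus_{S}C_*(\Cs{M'}{S})\tens{\mathrm{Aut}(S)}A^{\otimes S}$, so it suffices to compare the other two vertices and the structure arrows into this common bottom.

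First I would construct a forward morphism $\Phi\colon\int_{M'}\We_h(V)\to\mathrm{colim}(\ref{collapse})$ induced by the inclusion of the $M'$-side of the connected sum into $M\#M'$. This inclusion produces closed embeddings $\CS{M'}{S'}\hookrightarrow\CS{M\#M'}{S'}$ and $\CS{M'}{S'\to S}\hookrightarrow\CS{M\#M'/M'}{S'\to S}$ intertwining the upward arrows of the two diagrams. By the very construction of $\EV$ as the gluing of the Euler form $\ev$ on the $M'$-part with the propagator $\nu$ on the $M$-part along the boundary sphere, property \ref{3} of Proposition \ref{propagator} guarantees that $\EV$ pulls back to $\ev$ along this embedding. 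Consequently $\exp(h\Ealpha)$ pulls back to $\exp(h\ealpha)$, the composite map (\ref{composite2}) restricts to (\ref{composite}), and the downward arrows are intertwined as well; passing to pushouts yields $\Phi$.

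For the inverse I would stratify each middle term $C_*(\CS{M\#M'/M'}{S'\to S})$ according to the subset $T\subseteq S$ of positions equal to the collapse point $p$ and the induced partition of $S'$. The stratum with $T=\emptyset$ lies in the image of $\Phi$. For $T\ne\emptyset$ and each $t\in T$, the fibre over $t$ in the relative configuration space is a product of Fulton--MacPherson compactifications of configurations in $M_0=\tilde M\setminus\partial\tilde M$; properties \ref{2}, \ref{3}, and \ref{4} of Proposition \ref{propagator} ensure that the factors of $\EV$ involving these points extend the standard sphere form across the boundary. The local complex attached to each such $t$, after pushforward by $\pi_*$ in (\ref{composite2}), therefore computes the factorization complex $\int_{M}\We_h(V)$ on the closed manifold obtained by filling the removed disk. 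By Theorem \ref{one} this has one-dimensional homology, with the distinguished $0$-cycle representative of Proposition \ref{cycle}. Declaring $\Psi\colon\mathrm{colim}(\ref{collapse})\to\int_{M'}\We_h(V)$ to send each nonempty $T$-stratum to the insertion of this distinguished class at the points $t\in T\subset\Cs{M'}{S}$ produces a candidate inverse; the identity $\Psi\Phi=\mathrm{id}$ is immediate because chains in the image of $\Phi$ have empty $T$-stratum, and $\Phi\Psi=\mathrm{id}$ follows because the pushout relations of (\ref{collapse}) force each $T$-stratum to be identified with its pairing against the propagator, which is precisely the insertion performed by $\Phi$.

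The main obstacle is the rigorous identification of the local $T$-subcomplex with a presentation of $\int_{M}\We_h(V)$: the propagator lives on $\CS{\tilde M}{\set{2}}$ rather than on the sphere bundle of the closed $M$, so one must check that the boundary and link conditions of Proposition \ref{propagator} yield a chain-level, not merely cohomological, identification. The cleanest route is to use the number-of-points filtration noted after Proposition \ref{colimit} on both colimits. The associated graded on the $M'$-side reduces to the commutative factorization complex of Subsection \ref{comm}, while on the $T$-side the Koszul-type computation from the proof of Theorem \ref{one} forces the excess to be acyclic outside the top degree, where it contributes exactly the one-dimensional class needed. A standard spectral-sequence comparison then upgrades the identification of associated gradeds to the desired isomorphism of colimits.
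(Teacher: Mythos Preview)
Your overall plan --- build a map between the two pushout diagrams and compare via a filtration --- is the right shape, and your last paragraph is heading toward what the paper actually does. But the specific map $\Phi$ you construct does not exist. There is no embedding $\CS{M'}{S'}\hookrightarrow\CS{M\#M'}{S'}$: the closed manifold $M'$ is not a subspace of $M\#M'$, only the open piece $M'_0$ is, so your map is undefined on any chain touching the collapse point $p\in M'$. The paper's device, which you are missing, is to first represent every chain of $\int_{M'}\We_h(V)$ as $a_0\otimes c\otimes a_1\otimes\cdots\otimes a_s$ with $c\in C_*(\Cs{M'\setminus p}{S})$ and $a_0$ parked at $p$, and then send it to the colimit of (\ref{collapse}) via an embedding $\imath_p$ that places the $S$-configuration in $M'_0\subset M\#M'$ \emph{and adds the distinguished interior point $O\in\tilde M$} carrying $a_0$. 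The crucial compatibility that makes this a chain map is property \ref{4} of Proposition \ref{propagator} (the value of the propagator on $O\times\partial\tilde M$), not property \ref{3} as you invoke. With this map in hand, the filtration by number of points in $M'\setminus p$ has literally the same associated graded on both sides, namely $\We_h(V)_p\otimes\bigoplus_S C_*(\Cs{M'\setminus p}{S})\tens{\mathrm{Aut}(S)}\We_h(V)^{\otimes S}$, and the result follows without any appeal to Theorem \ref{one}.

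Your proposed inverse $\Psi$ is a second, independent gap: Theorem \ref{one} gives one-dimensional \emph{homology}, not a one-dimensional complex, so ``send each $T$-stratum to the insertion of the distinguished class'' is only a homology-level prescription and does not define a morphism of complexes into the pushout. Likewise, your claim that the associated graded on the $M'$-side ``reduces to the commutative factorization complex'' is not correct --- the number-of-points filtration has associated graded $\bigoplus_S C_*(\Cs{M'}{S})\otimes A^{\otimes S}$ with $A=\We_h(V)$, not the commutative algebra. The paper needs neither Theorem \ref{one} nor any Koszul acyclicity here; once the map $\imath_p$ is written down, the isomorphism on associated gradeds is tautological.
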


\begin{proof}
As it was discussed after Proposition \ref{coend}, 
the factorization complex 
is equipped with 
an increasing filtration by the number of points of the configuration
space of distinct points. Introduce a slightly
different filtration on $\int_{M'}\We_h(V)$
fixing an element of the algebra sitting at the point $p$,
if there nothing at this point, we assume that it is $1$.  
The associated graded object of this filtration 
is $\We_h(V)_p\otimes \bigoplus_{S} C_*(\Cs{M'\setminus 
p}{S})\tens{Aut(S)} {\We_h(V)}^{\otimes S'}$.
 For the same reason, because the horizontal arrow 
in (\ref{collapse}) is surjective, the colimit of (\ref{collapse})
is also filtrated with the same quotients. 

To prove the statement
we are going to  define a map from $\int_{M'} \We_h(V)$
to the colimit of (\ref{collapse}). As it was already discussed 
after Proposition \ref{coend}, every chain in $\int_{M'} \We_h(V)$
may be presented as a chain of $\We_h(V)_p\otimes \bigoplus_{S} C_*(\Cs{M'\setminus 
p}{S})\tens{Aut(S)} {\We_h(V)}^{\otimes S'}$. Take such a representative
$a_0\otimes c\otimes a_1\otimes\cdots\otimes a_s$, where 
$c\in C_*(\Cs{M'\setminus p}{S}$ 
and $a_i\in \We_h(V)$,
and send it to $a_0\otimes a_1\otimes \cdots\otimes a_s \otimes \imath_{p*} c$,
where map 
$$
\imath_p\colon \Cs{S}{M'} \hookrightarrow \CS{M\# M'/M'}{(S\cup p)\stackrel{id}{\to} (S\cup p)}
$$
embeds the configuration and adds the point $p$ to it.

One may see that this map is a map of complexes due to property 
\ref{4}  of the propagator (Proposition \ref{propagator})  and 
an isomorphism on the associated graded object. Consequently, it gives an 
isomorphism of complexes.
\end{proof}

Call the morphism $\Col \colon \int_{M\# M'} \We_h(V) \to \int_{M'} \We_h(V)$ 
just constructed  the {\em collapse morphism}.

The proof of this Proposition may be interpreted by means of
cosheaves in the spirit of the discussion at the end of Subsection \ref{FH}.
Indeed, the colimit of Diagram (\ref{collapse}) gives a cosheaf
on the Ran space of $M'$. Theorem \ref{final} 
states that it is isomorphic to the one given
by the Weyl algebra.
 
Note finally, that Theorem \ref{final} may be reformulated as follows:
for a homological sphere $M$ the factorization complex
$\int_{\tilde{M}} \We_h(V)$ is isomorphic to $\We_h(V)$
as an $\int_{[0,1]\times S^{n-1}} \We_h(V)$-module
(about the module structure
on the factorization complex of a manifold with boundary
see e.~g. \cite{G} and references therein). 

\subsection{Invariants}
\label{Inv}

 Factorization homology of Weyl $n$-algebras may be used to construct 
invariants of manifolds. Let $M$
be a closed $n$-manifold and $V$
be a $\mathbb{Z}$-graded finite-dimensional vector space with a non-degenerate pairing of degree $1-n$.
By Theorem \ref{one}
the factorization homology of $\We_h(V)$
on $M$ is one-dimensional.
The idea of the invariant  we
are going to build is to produce
in some manner a cycle in $\int_M \We_h(V)$ 
 and calculate 
the class represented by it. As the homology group is one-dimensional,
this class is a multiple of a standard one.
The series we get this way is the invariant of the manifold.

Let us restrict ourselves with the following conditions:  
$M$ is a rational homology sphere
of odd dimension $n$ and $V$ be a $\mathbb{Z}$-graded finite-dimensional vector space,
which has only odd-dimensional components.
Under these conditions due to Proposition \ref{cycle}
the only class in the factorization homology
is presented by an especially simple cycle,
just an element of the top degree power of $V$ sitting at a point,
call this cycle the standard one.

To produce a different cycle we shall resort to the morphism 
given by Proposition \ref{modules}. It sends the Chevalley--Eilenberg complex
of the Lie algebra $\LL(\We(V))$
associated with the Weyl algebra  $\We(V)$ 
to the factorization complex of $\We(V)$.

By Proposition, for $n>1$ \ref{poiss} $\LL(\We(V))$ 
is $\mathbb{Z}$-graded Poisson Lie algebra. Suppose that $\dim V \ge 3$ and denote by $\LL(\We(V)^{\ge 3})$
the Lie subalgebra of polynomials of degree not less than 3.
One may see that a generator of $S^{top}V$ is in the center of $\LL(\We(V)^{\ge 3})$.
Thus the map 
\begin{equation*}
k\to \LL(\We(V)^{\ge 3}),
\label{top}
\end{equation*}
which sends the generator to a non-zero element from $S^{top}V$ is a morphism from the trivial 
$\LL(\We(V)^{\ge 3})$-module to the adjoint one. Consider the induced map
$$
C_{Ch}(\LL(\We(V)^{\ge 3})) \to C_{Ch}^{ad} (\LL(\We(V)^{\ge 3}))
$$
and combine it with map 
$$
 C_{Ch}^{ad} (\LL(\We(V)^{\ge 3}))\to \int _M \We(V)
$$
given by Proposition \ref{modules}.
The composite map
\begin{equation}
 C_{Ch} (\LL(\We(V)^{\ge 3}))\to \int _M \We(V)\xrightarrow{\sim} k[[h^{-1}, h]]
\label{invariant}
\end{equation}
is the desired invariant. In other words,
the invariant is a cohomology class of total degree zero of
$\mathbb{Z}$-graded Lie algebra $\LL(\We(V)^{\ge 3})$ with coefficients in $k[[h^{-1}, h]]$.  
To get just  an element of $k[[h^{-1}, h]]$ 
one may substitute a homology class of this Lie algebra in it.
Note, that the coefficients of this series are rational due to the remark 
preceding Example \ref{moyal}.

As it was already mentioned in the Introduction,
a  cocycle of the complex linear dual to the factorization
complex  $\int _M \We(V)$ which does 
not vanish on the standard cycle would make this invariant more explicit.
As such a cocycle is unavailable, we shall make use of the collapse morphism
from the previous Subsection. 

\begin{prop}
If $M$ and $M'$ are both rational homology odd-dimensional spheres and $V$ has only odd-degree 
components
then the collapse morphism 
 $$\Col \colon \int_{M\# M'} \We(V) \to \int_{M'} \We(V)$$ 
induces isomorphism on homologies.
\end{prop}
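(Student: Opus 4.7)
By Proposition~\ref{cycle}, and because the connected sum of rational homology spheres is again a rational homology sphere, both $H_*(\int_{M\#M'}\We(V))$ and $H_*(\int_{M'}\We(V))$ are free $k[h^{-1},h]]$-modules of rank one, each generated by the class of a single point labelled by a fixed generator $\sigma\in S^{\mathrm{top}}V$. It therefore suffices to verify that $\Col$ does not vanish on homology.

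The plan is to run the $h$-adic spectral sequence employed in the proof of Theorem~\ref{one} on both the source and the target of $\Col$, and to identify the induced map at the $E_0$ page with the collapse map for the underlying commutative algebra. The exponential $\exp(h\Ealpha)$ appearing in~(\ref{composite2}) lies in $1+h\cdot(\ldots)$, while the remaining ingredients in the definition of $\Col$---the fiber push-forward $\pi_*$, the commutative product $\mu$, and the natural embedding in the top row of~(\ref{collapse})---are independent of $h$, so $\Col$ respects the filtration by powers of $h$. At $h=0$ the exponential becomes the identity, $\We_{h=0}(V)$ reduces to the commutative polynomial algebra $k[V]$, and $\mathrm{gr}_h\Col$ reduces to the commutative collapse morphism: a labelled configuration on $M\#M'$ is pushed forward along $\col\colon M\#M'\to M'$ and the labels sitting at coinciding points are multiplied in $k[V]$.

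By Proposition~\ref{com} the $E_1$ pages are identified with the polynomial algebras $k[H_*(M\#M')\otimes V]$ and $k[H_*(M')\otimes V]$, and the induced homomorphism is the multiplicative extension of $\col_*\otimes\id_V$. The map $\col$ collapses $M\setminus\{*\}\subset M\#M'$ to a point of $M'$; since $M$ is a rational homology sphere, $M\setminus\{*\}$ has trivial reduced rational homology, so $\col$ is a rational homology equivalence and $\col_*$ is an isomorphism of graded vector spaces that preserves Poincar\'e duality. Hence $\mathrm{gr}_h\Col$ is an isomorphism at $E_1$ and commutes with the Koszul differential attached to the pairing on $H_*(\cdot)\otimes V$ that was identified in the proof of Theorem~\ref{one}. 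Both spectral sequences degenerate at $E_2$ with a one-dimensional limit, so the induced map on $E_\infty$ is an isomorphism; inverting $h$ then yields that $\Col$ carries a generator to a generator.

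The main obstacle is the identification at the $E_0$ level: one has to verify that the composite $\mu\otimes\pi_*$ applied to the natural embedding $C_*(\CS{M\#M'}{S'})\hookrightarrow C_*(\CS{M\#M'/M'}{S'\to S})$ reproduces, modulo the commutative relations, the chain-level push-forward along $\col^{S'}$ followed by the multiplication of labels in $k[V]$. This is a routine unwinding of the fiber-product definition of $\CS{M\#M'/M'}{S'\to S}$ together with the observation that $\exp(h\Ealpha)\equiv\id$ modulo $h$; once this is done the spectral-sequence comparison is automatic.
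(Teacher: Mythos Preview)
Your argument is correct, but it is considerably more elaborate than what is needed. The paper's proof is the one-line observation you have already set up in your first paragraph: since both homologies are one-dimensional with generator the class of a point labelled by $\sigma\in S^{\mathrm{top}}V$, it suffices to check that $\Col$ carries this cycle to itself. But a $0$-cycle supported at a point of $M'_0\subset M\#M'$ is fixed by the collapse construction---the embedding into $\CS{M\#M'/M'}{\set 1\to\set 1}$ followed by $\pi_*$ simply returns the same point of $M'$, and the exponential $\exp(h\Ealpha)$ acts trivially on a single-point chain. Hence the generator maps to the generator and one is done. Your spectral-sequence detour is not needed once you have invoked Proposition~\ref{cycle}.

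That said, your approach does buy something the paper's does not. The $h$-adic comparison only uses that $M$ is a rational homology sphere (so that $\col_*$ is a rational homology isomorphism); it makes no use of the hypotheses on $M'$ or on the parity of $V$. Thus your argument, stripped of its first paragraph, actually proves the stronger statement that $\Col$ is a quasi-isomorphism for \emph{any} closed oriented $M'$ and any $V$, whenever $M$ is an odd-dimensional rational homology sphere. The paper's direct argument, by contrast, relies on the explicit form of the generator supplied by Proposition~\ref{cycle}, which is only available under the stated restrictions. The trade-off is that your route requires verifying compatibility of $\Col$ with the $h$-filtration and identifying $\mathrm{gr}_h\Col$ with the commutative collapse; this is straightforward for the map into the colimit of~(\ref{collapse}), but for the inverse of the isomorphism of Theorem~\ref{final} one should note that the map $\imath_{p*}$ constructed there is $k[[h]]$-linear, hence so is its inverse.
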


\begin{proof}
By Proposition \ref{cycle}, the non-trivial class in homology of  
$\int_{M\# M'} \We(V)$ 
is presented by a cycle which is an element of the algebra sitting at a point.
As it follows from its definition, the collapse morphism 
sends it to the same cycle in $\int_{M'} \We(V)$, which is
non-trivial by Proposition \ref{cycle}.    
\end{proof}

Assuming $M'=S^n$ in the Proposition above
we get an isomorphism $\int_{M} \We(V)\to \int_{S^n}\We(V)$.
In composition with (\ref{invariant}) we get a morphism
\begin{equation*}
 C_{Ch} (\LL(\We(V)^{\ge 3}))\to \int _{S^n} \We(V),
\end{equation*}
which is better than (\ref{invariant}), because the target does not depend on $M$.

Unwinding the definition of the collapse morphism
one may see that this cocycle of $\LL(\We(V)^{\ge 3})$ 
taking values in $\int_{S^n}\We(V)$ is a sort of cocycle
given by the graph complex, see
\cite{ Kon2, Kon1, 
Zab}. It is known (\cite{AS, AS2}), that perturbative Chern--Simons invariants
also give classes in the graph complex in the same way, by integration of the
powers of the propagator. It makes us believe that our invariants
coincide with the perturbative Chern--Simons ones. 
Perhaps, some good choice of the propagator will lead
to a more explicit formula.

Finally, let $n=3$, $V$ be a $\mathbb{Z}$-graded finite-dimensional vector space of dimension more than 
$2$ concentrated in degree
$1$ with skew-symmetric pairing of degree $-2$, that is $V[1]$
is equipped with a symmetric pairing.
In this case for dimensional reasons the cocycle is given by trivalent 
graphs. If $V[1]$ is the underlying space of a Lie algebra
with non-degenerate pairing, then the element in $S^3 V[1]$,
which is the composition of the Lie bracket and the pairing,
is a Maurer--Cartan 
element in $\LL(\We(V)^{\ge 3})$. Its power gives a homology class.
Values of the cocycle on it must be the perturbative invariants 
associated with given Lie algebra. More about this case
the reader may find in the Appendix.

\section*{Appendix}

The physical definition of perturbative Chern--Simons invariant
is based on the asymptotic  series of the oscillating integral
 $\int e^{iS}$
taken over the space of all $G$-connection $A$ on $M$, where 
$S=\frac{\kappa}{4\pi}\int_M \mathop\mathrm{tr}(A\wedge dA+\frac{2}{3}A\wedge A\wedge A)$ is the 
Chern--Simons 
functional, $M$ is a $3$-manifold and $G$ is a semi-simple Lie group. 
The aim of this appendix is to demonstrate speculatively
how to interpret the calculation of such an integral in terms of the factorization complex.

Thus, we have the infinite-dimensional space of connections, a function $S$
on it and we want to calculate the asymptotic series in $1/\kappa$ of the oscillating integral.
If $M$ is a homology sphere, then function $S$ has a non-degenerate 
critical point at the origin. Thus, the free term of the series in hand
is the Gaussian integral by an infinite-dimensional space
and is unapproachable by algebraic methods. But after dividing by this term
the series may be calculated by means  of the method of Feynman diagrams.

To explain how this method works consider an abstract situation,
the reader can find more at \cite{JF}. 
Let $V$ be a Euclidean vector space and
$f$ be a smooth function on it 
such that its Taylor series at the origin start with terms of degree, at least, three.
Choose a volume form on $V$ and consider the integral $\int e^{(-{\lvert x\rvert}^2+tf)}$.
Consider the twisted de Rham complex of polynomial forms 
$\Omega^*_t$ given by differential forms on $V$
with differential $d_{dR}-2(\mathbf{x}, d\mathbf{x})+ t\,df$,
where $d_{dR}$ is the de Rham differential.
One may see that complex $\Omega^*_t\otimes \mathbb{R}[[t]]$
has only top degree cohomology, which is one-dimensional over $\mathbb{R}[[t]]$.
This one-dimensional vector bundle over the $t$-line 
has the Gau\ss--Manin connection and a section given by 
the chosen volume form on $V$. Their quotient
is a series in $t$ up to a constant factor
and one may show that this is the asymptotic expansion of the oscillating integral
up to a constant.

We are now going to construct a $\mathfrak{fm}_3$-algebra 
(or equivalently, by Proposition \ref{eq}, an $e_3$-algebra) the factorization
complex of which on a homology $3$-sphere
$M$ resembles the twisted de Rham complex as above.
Let $g$ be a Lie algebra with a non-degenerate invariant bilinear form.
The desired  $\mathfrak{fm}_3$-algebra is a deformation of the Chevalley--Eilenberg 
commutative
$dg$-algebra $C_{Ch}^\bullet(g)$ in the class of $\mathfrak{fm}_3$-algebras. The deformation
may be described as follows: forget about the differential on the Chevalley--Eilenberg
complex and deform the underlying polynomial algebra as in the
definition of the Weyl algebra, that is  
apply Definition  \ref{wna} 
to the space $g^\vee$ and the
pairing given by the invariant bilinear form.
It is easy to check that this deformation respects the differential.
Note, that this $e_3$-algebra is the algebra of $\mathrm{Ext}$'s 
from the unit to itself in 
$e_2$-category of representations of a quantum group.
Denote it by $Ch^\bullet_h(g)$.

Alternatively, this  $\mathfrak{fm}_3$-algebra may be defined as follows. Start
with  $\mathbb{Z}$-graded finite-dimensional vector space $g^\vee[1]$ with the pairing of degree $-2$
given by the invariant scalar product and build the Weyl algebra
$\mathcal{W}^3_h(g^\vee[1])$. Then define differential on it
as $\frac{1}{h}\{\cdot,q\}$, where $\{,\}$ is image of the Lie bracket under (\ref{morphism})
and $q\in S^3(g^\vee[1])$ is the composition of the Lie  bracket
on $g$ and the scalar product. 
One may show, that similarly to Hochschild homology (see e.~g. \cite[Propositon 1.3.3]{Lo}),
factorization homology on a closed manifold is invariant under inner deformations.
It follows 
by Theorem \ref{one} 
that the homology of $\int_M Ch^\bullet_h\otimes k[h^{-1}, h]]$
is free $k[h^{-1}, h]]$-module of rank 1. And moreover,
this homology   is equipped with
a connection along the formal deleted $h$-line.

To fulfill the analogy (note, that
$t$ corresponds to $1/h$) we have to present a section
of this one-dimensional vector bundle and compare it with
a horizontal one. 
Formula (\ref{invariant}) produces elements in the factorization complex
of $Ch_h^\bullet(g)$. One may see, that $1$ goes to a cycle
(in fact, this is the cycle given by Proposition \ref{cycle})
and this is an analog
of the section given by the volume form  on $V$ in the example above.
On the other hand, one may see that a cycle horizontal  with respect to the connection
is the image under  (\ref{cycle}) of the cycle
$
\sum_i\frac{h^{-i}}{i!}\underbrace{q\wedge\cdots\wedge q}_i.
$
The quotient of these two sections  is an analog of the asymptotic series
and is given by the invariants as in Subsection \ref{Inv}.

\bibliographystyle{alphanum}
\bibliography{weyl}

\end{document}